\documentclass[a4paper,12pt]{amsart}
\usepackage{amssymb}
\usepackage{cite}
\usepackage{ifthen}
\usepackage[dvips]{graphicx}
\usepackage{booktabs} 
\usepackage{siunitx}  
\usepackage{amsmath, amssymb} 
\usepackage{subcaption} 
\usepackage[utf8]{inputenc}
\usepackage{graphicx}
\usepackage{booktabs}
\usepackage{caption}

\newcommand{\blue}{\color{blue}}
\nonstopmode \numberwithin{equation}{section}
\newtheorem*{theoC}{Theorem C}
\newtheorem*{theoD}{Theorem D}
\newtheorem*{theoE}{Theorem E}
\newtheorem*{theoF}{Theorem F}
\newtheorem*{theoG}{Theorem G}

\usepackage{amssymb}
\usepackage{ifthen}
\usepackage{graphicx}
\usepackage{amsmath}
\usepackage[T1]{fontenc} 
\usepackage[utf8]{inputenc}
\usepackage[usenames,dvipsnames]{color}
\usepackage{color}
\usepackage[english]{babel}
\usepackage{fancyhdr}
\usepackage{fancybox}
\usepackage{tikz}

\theoremstyle{plain}
\newtheorem{prop}{Proposition}

\newtheorem{conj}{Conjecture}

\theoremstyle{definition}
\newtheorem{defi}{Definition}[section]

\newtheorem{cor}{Corollary}[section]
\newtheorem{thm}{Theorem}[section]

\newtheorem{lem}{Lemma}[section]
\newtheorem{prob}{Problem}
\newtheorem{rem}{Remark}[section]

\theoremstyle{plain}

\newtheorem*{lemA}{Lemma A}
\newtheorem*{lemB}{Lemma B}

\newcounter{minutes}
\divide\time by 60
\newcounter{hours}
\multiply\time by 60
\addtocounter{minutes}{-\time}

\newcounter {own}
\def\theown {\thesection       .\arabic{own}}

\newenvironment{pf}[1][]{%
	\vskip 3mm
	\noindent
	\ifthenelse{\equal{#1}{}}%
	{{\slshape Proof. }}%
	{{\slshape #1.} }%
}%
{\qed\bigskip}

\newcounter{alphabet}

\def\be{\begin{equation}}
	\def\ee{\end{equation}}

\newcommand{\bee}{\begin{enumerate}}
	\newcommand{\eee}{\end{enumerate}}

\newcommand{\blem}{\begin{lem}}
	\newcommand{\elem}{\end{lem}}
\newcommand{\bthm}{\begin{thm}}
	\newcommand{\ethm}{\end{thm}}
\newcommand{\bcor}{\begin{cor}}
	\newcommand{\ecor}{\end{cor}}
\newcommand{\beg}{\begin{examp}}
	\newcommand{\eeg}{\end{examp}}
\newcommand{\begs}{\begin{examples}}
	\newcommand{\eegs}{\end{examples}}

\newcommand{\bdefn}{\begin{defn}}
	\newcommand{\edefn}{\end{defn}}

\newcommand{\bprob}{\begin{prob}}
	\newcommand{\eprob}{\end{prob}}
\newcommand{\bei}{\begin{itemize}}
	\newcommand{\eei}{\end{itemize}}

\newcommand{\bcon}{\begin{conj}}
	\newcommand{\econ}{\end{conj}}
\newcommand{\bcons}{\begin{conjs}}
	\newcommand{\econs}{\end{conjs}}
\newcommand{\bprop}{\begin{prop}}
	\newcommand{\eprop}{\end{prop}}
\newcommand{\br}{\begin{rem}}
	\newcommand{\er}{\end{rem}}
\newcommand{\brs}{\begin{rems}}
	\newcommand{\ers}{\end{rems}}
\newcommand{\bo}{\begin{obser}}
	\newcommand{\eo}{\end{obser}}
\newcommand{\bos}{\begin{obsers}}
	\newcommand{\eos}{\end{obsers}}
\newcommand{\bpf}{\begin{pf}}
	\newcommand{\epf}{\end{pf}}
\newcommand{\ba}{\begin{array}}
	\newcommand{\ea}{\end{array}}
\newcommand{\beq}{\begin{eqnarray}}
	\newcommand{\beqq}{\begin{eqnarray*}}
		\newcommand{\eeq}{\end{eqnarray}}
	\newcommand{\eeqq}{\end{eqnarray*}}

\begin{document}

\title{Sharp Landau-Type Theorems and Schlicht Disc Radii for certain Subclasses of Harmonic Mappings}

\author{Molla Basir Ahamed$^*$}
\address{Molla Basir Ahamed, Department of Mathematics, Jadavpur University, Kolkata-700032, West Bengal, India.}
\email{mbahamed.math@jadavpuruniversity.in}

\author{Rajesh Hossain}
\address{Rajesh Hossain, Department of Mathematics, Jadavpur University, Kolkata-700032, West Bengal, India.}
\email{rajesh1998hossain@gmail.com}

\subjclass[2020]{Primary: 30C99; Secondary 30C62}
\keywords{Harmonic mappings, Landau-type theorems, Univalence radius, Schlicht disc, Lerch Transcendent.}
\def\thefootnote{}
\footnotetext{ {\tiny File:~\jobname.tex,
printed: \number\year-\number\month-\number\day,
          \thehours.\ifnum\theminutes<10{0}\fi\theminutes }
} \makeatletter\def\thefootnote{\@arabic\c@footnote}\makeatother
\begin{abstract} 
	Let $\mathcal{H}$ be the class of all complex-valued harmonic mappings $f=h+\overline{g}$ defined on the unit disc $\mathbb{D}=\{z\in\mathbb{C}:|z|<1\}$ with the normalization $h(0)=0=h'(0)-1$, here $h$ and $g$ are analytic functions in $\mathbb{D}$. In this paper, we investigates Landau-type theorems for several significant subclasses of sense-preserving harmonic mappings. Specifically, we establish sharp Landau-type theorems for the class $\mathcal{P}_{\mathcal{H}}^{0}(M)$ and the parameterized class $\mathcal{W}_{\mathcal{H}}^{0}(\alpha)$ for $\alpha \ge 0$. For mappings in $\mathcal{W}_{\mathcal{H}}^{0}(\alpha)$, we derive the radii of univalence and the radii of the largest schlicht discs contained in the images of the unit disc, expressing these results in terms of the Lerch Transcendent function $\Phi(z,s,a)$ and the Dilogarithm function ${\rm Li}_2(z)$. The sharpness of the obtained radii is demonstrated by constructing appropriate extremal functions for each class. These results generalize and extend various known Landau-type theorems in the theory of harmonic mappings.
\end{abstract}
\maketitle
\pagestyle{myheadings}
\markboth{M. B. Ahamed and R. Hossain}{Sharp Landau-Type Theorems for Several Classes of Harmonic Mappings}
\section{\bf Introduction}
A continuous complex-valued function $f=u+iv$ is a complex-valued harmonic function in a domain $\Omega\subset\mathbb{C}$, if both $u$ and $v$ are real-valued harmonic functions in $\Omega$ (see \cite{Duren-Har-2004}). The inverse function theorem and a result of Lewy \cite{Lewy-BAMS-1936} shows that a harmonic function $f$ is locally univalent (\textit{i.e.,} one-to-one) in $\Omega$ if, and only if, the Jacobian $J_f$ is non-zero in $\Omega$, where 
\begin{align*}
	J_f(z)=|f_z(z)|^2-|f_{\bar{z}}(z)|^2.
\end{align*}
A harmonic function $f$ is said to be sense-preserving if $ J_f(z)>0$ for $z\in\Omega$. Let $\mathcal{H}$ denote the class of complex-valued harmonic functions $f$ in the unit disc $\mathbb{D}=\{z\in\mathbb{D} : |z|<1\}$, normalized by $f(0)=0=f_z(0)-1$. Each function $f$ in $\mathcal{H}$ can be expressed as $f=h+\overline{g}$, where $h$ and $g$ are analytic functions in $\mathbb{D}$ (see \cite{Duren-Har-2004}). Here $h$ and $g$ are called the analytic and co-analytic part of $f$ respectively, and have the following power series representations 
\begin{align*}
	h(z)=z+\sum_{n=2}^{\infty}a_nz^n\;\;\mbox{and}\;\; g(z)=\sum_{n=1}^{\infty}b_nz^n.
\end{align*}
Let $\mathcal{S}_{\mathcal{H}}$ be the subclass of $\mathcal{H}$ consisting of univalent, and sense-preserving harmonic mappings on $\mathbb{D}$. Let $\mathcal{H}^0=\{f\in\mathcal{H}: f_{\bar{z}}(0)=0\}$, and $\mathcal{S}^0_{\mathcal{H}}=\{f\in\mathcal{S}_{\mathcal{H}}: f_{\bar{z}}(0)=0\}$. Hence for any function $f=h+\overline{g}$ in $\mathcal{H}^0$, its analytic and co-analytic parts can be represented by 
\begin{align}\label{Eq-1.1B}
	h(z)=z+\sum_{n=2}^{\infty}a_nz^n\;\;\mbox{and}\;\; g(z)=\sum_{n=2}^{\infty}b_nz^n,
\end{align}
respectively. It is interesting to note that $\mathcal{S}_{\mathcal{H}}$ reduced to $\mathcal{S}$, the class of normalized analytic and univalent functions in $\mathbb{D}$, if the co-analytic part of functions in the class $\mathcal{S}_{\mathcal{H}}$ is zero. In $1984$, Clunie and Shell-Small \cite{Clunie-Sheil-Small-AASF-1984} investigated the class $\mathcal{S}_{\mathcal{H}}$, together with some geometric subclasses. Subsequently, the class $\mathcal{S}_{\mathcal{H}}$ and its subclasses have been extensively studied by several authors (see \cite{Bshouty-Lyzzaik-2011,Bshouty-Joshi-Joshi-2013,Clunie-Sheil-Small-AASF-1984,Kalaj-Punn-Vuorinen-2014,Wang-Liang-2001}.)\vspace{1.2mm}

For a continuously differentiable function $f$, we denote $\Lambda_f$ and $\lambda_f$ by
\begin{align*}
	\Lambda_f=\max_{0\leq\theta\leq 2\pi}|f_z+e^{-2i\theta}f_{\bar{z}}|=|f_z|+|f_{\bar{z}}|
\end{align*}
and 
\begin{align*}
	\lambda_f=\min_{0\leq\theta\leq 2\pi}|f_z+e^{-2i\theta}f_{\bar{z}}|=||f_z|-|f_{\bar{z}}||.
\end{align*}
Thus, it is easy to see that for sense-preserving harmonic mapping $f$, one has $J_f=\Lambda_f\lambda_f$. A harmonic mapping $f=h+\overline{g}$ defined on the unit disc $\mathbb{D}$ can be expressed as 
\begin{align*}
	f\left(re^{i\theta}\right)=\sum_{n=0}^{\infty}a_nr^ne^{in\theta}+\sum_{n=1}^{\infty}\overline{b_n}r^ne^{-in\theta},\; 0\leq r<1,
\end{align*}
where 
\begin{align*}
	h(z)=\sum_{n=0}^{\infty}a_nz^n\;\; \mbox{and}\;\; g(z)=\sum_{n=1}^{\infty}b_nz^n.
\end{align*}
The classical Landau theorem (see \cite{Landau-192}) asserts that if $f$ is a holomorphic mapping with $f(0)=0=f'(0)-1$ and $|f(z)|<M$ for $z\in\mathbb{D}$, then $f$ is univalent in $\mathbb{D}_{r_0}$, and $f(\mathbb{D}_{r_0})$ contains a disc $\mathbb{D}_{\sigma_0}$, where
\begin{align*}
	r_0=\frac{1}{M+\sqrt{M^2-1}}\;\;\mbox{and}\; \sigma_0=Mr_0^2.
\end{align*}
The quantities $r_0$ and $\sigma_0$ cannot be improved, the extremal function is 
\begin{align*}
	f_0(z)=Mz\left(\frac{1-Mz}{M-z}\right).
\end{align*}
For a comprehensive look at recent advancements in Landau-type theorems for various function classes, including harmonic, biharmonic, and polyanalytic mappings, we refer to the works \cite{Allu-Kumar-JMAA-2024,Chen-Punno-Rasila-2014,Lui-CMA-2009,Liu-Xu-MM-2024,Wang-Zhong-CMFT-2025,Li-Liu-Ponnusamy-Zhao-JMAA-2026,Abdulhadi-Abu-Muhanna-2006,Abdulhadi-Hajj-2022,Chen-Ponnusamy-Wang-2011,Chen-Ponnusamy-Wang-2010,Chen-Zhu-2019,Liu-Chen-2018,Liu-Luo-Luo-2020} and the references therein. These studies extend the classical theory by establishing sharp univalence radii and schlicht disk dimensions for specialized subclasses such as sense-preserving harmonic mappings and log-$\alpha$-analytic functions.\vspace{1.2mm}

However, for holomorphic functions $f$ on the unit disc $\mathbb{D}$ with $f'(0)=1$, there is a Bloch theorem (see \cite{Bloch-Les-1925,Huang-JMAA-2008}) which asserts the existence of a positive constant $b$ such that $f(\mathbb{D})$ contains a schlicht disc of radius $b$. A disc $D$ is said to be schlicht disc if there exists a region $\Omega$ in the unit disc $\mathbb{D}$ such that $f$ is univalent on $\Omega$ and $f(\Omega)=D$. Let $b(f)$ denote the least upper bound of the radii of all schlicht discs that $f$ carries and $\mathcal{F}$ denote the set of all holomorphic functions defined on $\overline{\mathbb{D}}:=\{z :|z|\leq 1\}$ satisfying $|f'(0)|=1$, then the Bloch constant is the number defined by 
\begin{align*}
	\beta(\mathcal{F})=\inf\{b(f) : f\in\mathcal{F}\}.
\end{align*}
If one considers the function $f(z)=z$, then clearly $\beta(\mathcal{F})\leq 1$. While the exact value of $\beta(\mathcal{F})$ is still unknown, better estimates have been established. In $1996$, Chen and Gauthier \cite{Chen-Gauthier-1996} proved that $\beta(\mathcal{F})$ lies within the interval $[0.4330, 0.4719]$, \textit{i.e.}, $0.4330\leq \beta(\mathcal{F})\leq 0.4719$.\vspace{1.2mm}

In $2000$, Chen \emph{et al.} \cite{Chen-Gauthier-Hengartner-2000} established the following two versions of Landau-Bloch type theorems for bounded harmonic mapping on the disc under a suitable restriction.
Theorems A and B are not sharp. In $2006$, better estimates for Theorems A and B were given by Grigoryan \cite{Grigoyan-CV-2006}. Specifically, Grigoryan proved the following lemma.
\begin{lemA}\cite{Grigoyan-CV-2006}\label{Lem-AA}
Assume that $f=h+\overline{g}$ with $h$ and $g$ analytic in the unit disc $\mathbb{D}$ and $h(z)=\sum_{n=1}^{\infty}a_nz^n$ and $g(z)=\sum_{n=1}^{\infty}b_nz^n$ for $z\in\mathbb{D}$.
\begin{enumerate}
	\item[(a)] If $|f(z)|<M$ for $z\in\mathbb{D}$, then 
	\begin{align*}
		|a_n|, \; |b_n|\leq M,\; n=1, 2, \ldots
	\end{align*} 
	\item[(b)] If $\Lambda_f\leq \Lambda$ for $z\in\mathbb{D}$, then 
	\begin{align*}
		|a_n|+|b_n|\leq \frac{\Lambda}{n},\; n=1, 2, \ldots
	\end{align*}
\end{enumerate}
\end{lemA}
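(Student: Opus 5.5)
Both parts will come from writing the relevant quantity as a Fourier series on a circle $|z|=r<1$, computing its coefficients, and then letting $r\to1^-$.

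\emph{Part (a).} Fix $0<r<1$. The expansion $f(re^{i\theta})=\sum_{n\ge1}a_nr^ne^{in\theta}+\sum_{n\ge1}\overline{b_n}r^ne^{-in\theta}$ converges absolutely and uniformly in $\theta$, so term-by-term integration is justified. Orthogonality of the exponentials gives
\begin{align*}
a_nr^n=\frac{1}{2\pi}\int_0^{2\pi}f(re^{i\theta})e^{-in\theta}\,d\theta,\qquad \overline{b_n}\,r^n=\frac{1}{2\pi}\int_0^{2\pi}f(re^{i\theta})e^{in\theta}\,d\theta .
\end{align*}
Since $|f|<M$, each integral has modulus less than $M$. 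Hence $|a_n|r^n\le M$ and $|b_n|r^n\le M$. Letting $r\to1^-$ gives $|a_n|,|b_n|\le M$.

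\emph{Part (b).} I will use the directional derivative of $f$ in direction $e^{i\theta}$. Write $\partial_\theta$ for $\partial/\partial\theta$. Along the circle $z=re^{i\theta}$,
\begin{align*}
\partial_\theta f(re^{i\theta})=i\bigl(zh'(z)-\overline{zg'(z)}\bigr)=i\Bigl(\sum_{n\ge1}na_nr^ne^{in\theta}-\sum_{n\ge1}n\overline{b_n}r^ne^{-in\theta}\Bigr).
\end{align*}
Also $|\partial_\theta f|=r\,|h'(z)-e^{-2i\theta}\overline{g'(z)}|\le r(|h'|+|g'|)=r\Lambda_f\le r\Lambda$. Extracting the $n$-th and $(-n)$-th Fourier coefficients as in part (a) gives $n|a_n|r^n\le r\Lambda$ and $n|b_n|r^n\le r\Lambda$ separately. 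This is not enough, because we need the bound for the sum $|a_n|+|b_n|$.

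This is the main obstacle. To handle it, I will rotate the direction of differentiation. For a fixed real $\varphi$, consider the derivative of $f$ along the direction $e^{i(\theta+\varphi)}$ at $z=re^{i\theta}$:
\begin{align*}
D_\varphi(\theta)=e^{i\varphi}h'(z)+e^{-i\varphi}\overline{g'(z)}.
\end{align*}
Its modulus is at most $|h'|+|g'|\le\Lambda$ for every $\theta$ and $\varphi$. Expanding,
\begin{align*}
D_\varphi(\theta)=e^{i\varphi}\sum_{n\ge1} na_nr^{n-1}e^{i(n-1)\theta}+e^{-i\varphi}\sum_{n\ge1} n\overline{b_n}r^{n-1}e^{-i(n-1)\theta}.
\end{align*}
The $a_n$-term and the $b_n$-term occupy the frequencies $e^{i(n-1)\theta}$ and $e^{-i(n-1)\theta}$, so I will combine these two coefficients with a weight. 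Take
\begin{align*}
I=\frac{1}{2\pi}\int_0^{2\pi}D_\varphi(\theta)\bigl(e^{-i(n-1)\theta}e^{-i\alpha}+e^{i(n-1)\theta}e^{-i\beta}\bigr)\,d\theta .
\end{align*}
For $n\ge2$ the two exponentials in the weight pick out the two separate coefficients, so
\begin{align*}
I=nr^{n-1}\bigl(e^{i(\varphi-\alpha)}a_n+e^{-i(\varphi+\beta)}\overline{b_n}\bigr).
\end{align*}
Choose $\alpha,\beta$ so that both terms equal their moduli: with $\varphi=0$, take $\alpha=\arg a_n$ and $\beta=\arg b_n$. Then $I=nr^{n-1}(|a_n|+|b_n|)$.

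On the other side, the weight has modulus $|2\cos((n-1)\theta+(\beta-\alpha)/2)|$. Its mean over a period is $4/\pi$, which gives
\begin{align*}
nr^{n-1}(|a_n|+|b_n|)\le\frac{4}{\pi}\Lambda .
\end{align*}
This has a worse constant than required. To get exactly $\Lambda/n$, I will instead exploit freedom in $\varphi$ pointwise: at each $\theta$ we may choose $\varphi=\varphi(\theta)$. The function $\Phi(\theta)=e^{i\varphi(\theta)}h'+e^{-i\varphi(\theta)}\overline{g'}$ has modulus at most $\Lambda$, but it is no longer a clean Fourier series, so that idea does not apply directly.

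The cleaner route is to average the single-coefficient estimates over $\varphi$. Set $F_\varphi(z)=e^{i\varphi}h(z)+e^{-i\varphi}\overline{g(z)}$, which is harmonic with $\Lambda_{F_\varphi}=\Lambda_f$. For the case $n=1$, take $\varphi$ with $e^{i\varphi}a_1$ and $e^{-i\varphi}\overline{b_1}$ aligned. Then $|F_{\varphi,z}+F_{\varphi,\bar z}|$ at $0$ equals $|a_1|+|b_1|$, and this is at most $\Lambda$ because $\Lambda_f(0)=|a_1|+|b_1|\le\Lambda$ directly.

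For $n\ge2$ I will use the standard trick: $\tilde f(z)=\frac1n\sum_{k=0}^{n-1}\omega^{-k}f(\omega^k z)$ with $\omega=e^{2\pi i/n}$. This averaging kills all terms except those with $a_{mn+1}$ and similar; it is adapted to isolating coefficients. Composing instead with $z\mapsto z^{n}$ is harder. If these fail, I will fall back on the directional derivative in the radial direction $\partial_r$, where
\begin{align*}
r\,\partial_r f=zh'+\overline{zg'},
\end{align*}
and combine it with $\partial_\theta f$ via
\begin{align*}
\cos\psi\, r\partial_rf+\sin\psi\,(-i)\partial_\theta f=e^{i\psi}\cdot(\ldots).
\end{align*}
Both quantities are bounded by $r\Lambda$. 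Choosing $\psi$ so that the $a_n$ and $\overline{b_n}$ coefficients at frequency $n$ combine with matched phases, then extracting the $n$-th coefficient, should yield $n r^n(|a_n|+|b_n|)\le r\Lambda$. Letting $r\to1^-$ then gives $|a_n|+|b_n|\le\Lambda/n$. Making this phase matching exact is the step I expect to require the most care.
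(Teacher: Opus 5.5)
Your part (a) is fine; it is the usual Fourier-coefficient argument. The case $n=1$ of part (b) is also fine. The paper only quotes this lemma from Grigoryan, so there is no in-paper proof to match.

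For $n\ge2$, however, part (b) is not established. The only estimate you actually obtain is $n(|a_n|+|b_n|)\le\frac{4}{\pi}\Lambda$. (To get even that, $\beta=\arg b_n$ must be $\beta=-\arg b_n$, since you need $e^{-i\beta}\overline{b_n}=|b_n|$.) None of the remaining routes you list can close the gap, and the obstruction is structural.

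Every quantity you build from $f$ carries $a_n$ at the frequency $e^{in\theta}$ and $\overline{b_n}$ at the conjugate frequency $e^{-in\theta}$. This holds for $D_\varphi$, for $r\partial_rf=zh'+\overline{zg'}$, for $\partial_\theta f=i(zh'-\overline{zg'})$, and for any real-linear combination of them. Your last combination equals $(\cos\psi+\sin\psi)zh'+(\cos\psi-\sin\psi)\overline{zg'}$, not $e^{i\psi}$ times something useful.

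Extracting one Fourier coefficient therefore sees only one of $a_n,b_n$, whatever $\psi$ is. Coupling the two frequencies with a unimodular weight costs the mean of $|2\cos|$, which is exactly your $4/\pi$. The rotation average $\frac1n\sum_k\omega^{-k}f(\omega^kz)$ does not help either. It retains $a_m$ only for $m\equiv1$ and $b_m$ only for $m\equiv-1\pmod n$, so for $n\ge2$ it kills $a_n$ and $b_n$ altogether.

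The missing idea is to stop working with $f$ itself. Use the fact that $\Lambda_f=|h'|+|g'|$ involves only moduli, so $\overline{g'}$ may be traded for $\epsilon g'$. For a constant $|\epsilon|=1$, the function $\phi_\epsilon=h'+\epsilon g'$ is analytic in $\mathbb{D}$ and satisfies $|\phi_\epsilon(z)|\le|h'(z)|+|g'(z)|=\Lambda_f(z)\le\Lambda$. Its coefficient of $z^{n-1}$ is $n(a_n+\epsilon b_n)$, so now $a_n$ and $b_n$ sit at the same frequency.

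Cauchy's estimate on $|z|=r$ gives $n|a_n+\epsilon b_n|r^{n-1}\le\Lambda$. Choose $\epsilon$ so that $\epsilon b_n$ has the argument of $a_n$; then $|a_n+\epsilon b_n|=|a_n|+|b_n|$. Letting $r\to1^-$ yields $|a_n|+|b_n|\le\Lambda/n$ for all $n\ge1$ simultaneously.
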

\begin{lemB}(The Schwarz Lemma, \cite{Duren-Har-2004}).
	Let $f$ be a harmonic mapping of the unit disk $\mathbb{D}$
	such that $f(0) = 0$ and $f(\mathbb{D}) \subset \mathbb{D}$. Then
	\begin{align}\label{Eq-1.1}
		\Lambda_f(0)\leq\frac{4}{\pi},
	\end{align}
	\begin{align}
			\Lambda_f(z)\leq\frac{8}{\pi(1-|z|^2)},\;\mbox{for}\;z\in\mathbb{D},
	\end{align}
	\begin{align}
	        |f(z)|\leq\frac{4}{\pi}\arctan|z|\leq\frac{4}{\pi}|z|,\;\mbox{for}\;z\in\mathbb{D}.
	\end{align}
\end{lemB}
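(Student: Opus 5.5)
We will prove the three inequalities of Lemma B by reducing each to a statement about a single real-valued harmonic function. The reduction rests on one observation. For a fixed real $\alpha$, put $u_\alpha=\operatorname{Re}(e^{-i\alpha}f)$. Then $u_\alpha$ is real harmonic in $\mathbb{D}$, satisfies $|u_\alpha|<1$, and
\begin{align*}
u_\alpha=\operatorname{Re}\bigl(e^{-i\alpha}h+e^{i\alpha}g\bigr)=\operatorname{Re}F_\alpha,\qquad F_\alpha:=e^{-i\alpha}h+e^{i\alpha}g,
\end{align*}
where $F_\alpha$ is analytic in $\mathbb{D}$.

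\emph{Pointwise bound.} Fix $z\neq0$ and choose $\alpha$ so that $e^{-i\alpha}f(z)=|f(z)|$, which gives $|f(z)|=u_\alpha(z)$. The harmonic function $v=\bigl(1+u_\alpha\bigr)/2$ takes values in $(0,1)$ and satisfies $v(0)=1/2$. We compare $v$ with the harmonic measure of a half-circle. The cleanest route is analytic: $F_\alpha$ maps $\mathbb{D}$ into the strip $S=\{|\operatorname{Re}w|<1\}$, and $F_\alpha(0)$ is purely imaginary because $f(0)=0$.
\begin{itemize}
\item The conformal map of $\mathbb{D}$ onto $S$ with $0\mapsto0$ is $\psi(\zeta)=\frac{2}{\pi i}\log\frac{1+i\zeta}{1-i\zeta}$, equivalently $\frac{4}{\pi}\operatorname{artanh}$ after a rotation.
\item After subtracting the imaginary constant $F_\alpha(0)$, which leaves $S$ invariant, we have $F_\alpha-F_\alpha(0)\prec\psi$.
\item Hence $u_\alpha(z)\le\max_{|\zeta|=|z|}\operatorname{Re}\psi(\zeta)$.
\item A direct computation shows this maximum equals $\frac{4}{\pi}\arctan|z|$, giving $|f(z)|\le\frac{4}{\pi}\arctan|z|$.
\end{itemize}
The second inequality $\arctan t\le t$ is elementary. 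We expect the main obstacle to be this explicit maximization of $\operatorname{Re}\psi$ on circles. It is a calculus exercise: $\operatorname{Re}\psi(re^{it})=\frac{2}{\pi}\arg\frac{1+ire^{it}}{1-ire^{it}}$, and the argument is maximal at $t=0$, where it equals $2\arctan r$.

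\emph{Bound at the origin.} By the same subordination $F_\alpha-F_\alpha(0)\prec\psi$, Schwarz's lemma gives $|F_\alpha'(0)|\le|\psi'(0)|=4/\pi$. Writing $h'(0)=a_1$ and $g'(0)=b_1$, this reads $|e^{-i\alpha}a_1+e^{i\alpha}b_1|\le4/\pi$ for every $\alpha$. We choose $\alpha$ so that $e^{-i\alpha}a_1$ and $e^{i\alpha}b_1$ have the same argument, which is possible since the two arguments move in opposite directions as $\alpha$ varies. This yields $\Lambda_f(0)=|a_1|+|b_1|\le4/\pi$, proving \eqref{Eq-1.1}.

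\emph{Bound at a general point.} For $z\in\mathbb{D}$ let $\varphi_z(\zeta)=(\zeta+z)/(1+\bar z\zeta)$ and set $G=f\circ\varphi_z$. This $G$ is harmonic and maps $\mathbb{D}$ into $\mathbb{D}$, but $G(0)=f(z)$ need not vanish. We rerun the origin argument without the normalization. $G$ decomposes as $h\circ\varphi_z+\overline{g\circ\varphi_z}$, and for each $\alpha$ the analytic function $e^{-i\alpha}h\circ\varphi_z+e^{i\alpha}g\circ\varphi_z$ still has real part in $(-1,1)$. The Schwarz–Pick lemma for maps into the strip $S$ gives a derivative bound at $0$ of at most $\frac{4}{\pi}\cos\bigl(\frac{\pi}{2}\operatorname{Re}w_0\bigr)\le\frac4\pi$, where $w_0$ is the value at $0$.

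By the chain rule, $\Lambda_G(0)=\Lambda_f(z)\,|\varphi_z'(0)|=\Lambda_f(z)(1-|z|^2)$. The same choice of $\alpha$ as at the origin then gives $\Lambda_f(z)\le\frac{4}{\pi(1-|z|^2)}$. This is in fact stronger than, and hence implies, the stated bound $\frac{8}{\pi(1-|z|^2)}$. If one prefers to avoid the strip Schwarz–Pick lemma, there is a cruder route: apply the origin case to $(G-f(z))/2$, which vanishes at $0$ and maps into $\mathbb{D}$. This gives exactly the constant $8/\pi$.
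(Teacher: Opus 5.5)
The paper gives no proof of Lemma B. It is quoted from Duren's monograph, and only the first inequality is used later: rescaled by $M$, it gives $\lambda_f(0)\ge \pi/(4M)$ from $J_f(0)=1$. So there is no argument in the paper to compare against.

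Your argument is correct and self-contained. You pass to the analytic functions $F_\alpha=e^{-i\alpha}h+e^{i\alpha}g$, whose real parts equal $\operatorname{Re}(e^{-i\alpha}f)\in(-1,1)$. You then subordinate $F_\alpha-F_\alpha(0)$ to the strip map $\psi(\zeta)=\frac{2}{\pi i}\log\frac{1+i\zeta}{1-i\zeta}=\frac{4}{\pi}\arctan\zeta$. This one device yields all three estimates:
\begin{itemize}
\item From $\operatorname{Re}\psi(re^{it})=\frac{2}{\pi}\arctan\frac{2r\cos t}{1-r^2}$, the maximum at $t=0$ is $\frac{4}{\pi}\arctan r$. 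This gives the $\arctan$ bound once you use the maximum principle to pass from $|\omega(z)|\le|z|$ to the circle $|\zeta|=|z|$.
\item Since $\psi'(0)=4/\pi$, choosing $\alpha=\frac12(\arg a_1-\arg b_1)$ gives $|a_1|+|b_1|\le 4/\pi$.
\item Precomposing with $\varphi_z$ transports the derivative bound to an arbitrary point.
\end{itemize}

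Your remark on the general point is right and worth keeping. Conjugating the disc Schwarz--Pick lemma by $\psi$ uses the fact that the hyperbolic density of the strip is $\frac{\pi}{4\cos(\pi\operatorname{Re}w/2)}$. This yields $\Lambda_f(z)\le\frac{4}{\pi(1-|z|^2)}$ without the normalization $f(0)=0$, which is Colonna's sharp estimate. Equality holds for $\operatorname{Re}\psi$ composed with a disc automorphism.

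Consequently, the constant $8/\pi$ in the lemma as stated is off by a factor $2$. Your cruder route, applying the origin case to $(G-f(z))/2$, shows exactly where that factor comes from.
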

The following result has been proved by Grigoryan using Lemma A, which improved the estimates in Theorems A and B.
\begin{theoC}\cite{Grigoyan-CV-2006}
	Let $f$ be harmonic mapping of the unit disc $\mathbb{D}$ such that $f(0)=0$, $J_f(0)=1$ and $|f(z)|<M$ for $z\in\mathbb{D}$. Then, $f$ is univalent on a disc $\mathbb{D}_{\rho_1}$ with 
	\begin{align*}
		\rho_1=1-\frac{2\sqrt{2}M}{\sqrt{\pi+8M^2}}
	\end{align*}
	and $f(\mathbb{D}_{\rho_1})$ contains a schlicht disc $\mathbb{D}_{R_1}$ with 
	\begin{align*}
		R_1=\frac{\pi}{4M}+4M-4M\sqrt{1+\frac{\pi}{8M^2}}.
	\end{align*}
\end{theoC}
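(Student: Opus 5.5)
Write $f=h+\overline{g}$ with $h(z)=\sum_{n\ge1}a_nz^n$ and $g(z)=\sum_{n\ge1}b_nz^n$. The plan is the standard Landau comparison argument, driven by the coefficient bounds of Lemma A(a) and the Schwarz Lemma (Lemma B).

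\textbf{Step 1: a lower bound at the origin.} Applying Lemma B to $f/M$ gives $\Lambda_f(0)=|a_1|+|b_1|\le 4M/\pi$. Together with $J_f(0)=\Lambda_f(0)\lambda_f(0)=1$ this yields
\[
\lambda_f(0)=\bigl||a_1|-|b_1|\bigr|=\frac{1}{\Lambda_f(0)}\ge \frac{\pi}{4M}.
\]
For the tails we use Lemma A(a) only in the weaker combined form $|a_n|+|b_n|\le 2M$. I would try the sharper $\ell^2$ bound $\sum_{n\ge1}(|a_n|^2+|b_n|^2)\le M^2$ first. However, the appearance of $\sqrt{\pi+8M^2}$ in $\rho_1$ suggests that the crude bound $2M$ is what produces these exact constants.

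\textbf{Step 2: univalence on $\mathbb{D}_{\rho_1}$.} Fix distinct $z_1,z_2$ with $|z_j|<r$ and integrate along the segment $[z_1,z_2]$:
\begin{align*}
|f(z_1)-f(z_2)| &\ge \Bigl|\int_{[z_1,z_2]} a_1\,dz+\overline{b_1}\,d\bar z\Bigr|
-\Bigl|\int_{[z_1,z_2]} (h'(z)-a_1)\,dz+\overline{(g'(z)-b_1)}\,d\bar z\Bigr| \\
&\ge |z_1-z_2|\Bigl(\lambda_f(0)-\sum_{n\ge2} n\bigl(|a_n|+|b_n|\bigr)r^{n-1}\Bigr).
\end{align*}
Inserting the bounds from Step 1 gives
\[
|f(z_1)-f(z_2)|\ge |z_1-z_2|\Bigl(\frac{\pi}{4M}-2M\Bigl(\frac{1}{(1-r)^2}-1\Bigr)\Bigr).
\]
The bracket is positive exactly when
\[
(1-r)^2>\frac{8M^2}{\pi+8M^2},\qquad\text{that is,}\qquad r<1-\frac{2\sqrt2\,M}{\sqrt{\pi+8M^2}}=\rho_1.
\]
So $f$ is univalent on $\mathbb{D}_{\rho_1}$.

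\textbf{Step 3: the schlicht disc.} Take $|z|=\rho_1$ and use the same estimate with the integrated series:
\begin{align*}
|f(z)| &\ge \lambda_f(0)\rho_1-2M\sum_{n\ge2}\rho_1^n
\ge \frac{\pi}{4M}\rho_1-2M\,\frac{\rho_1^2}{1-\rho_1} =: R_1.
\end{align*}
Set $s=1-\rho_1=2\sqrt2M/\sqrt{\pi+8M^2}$, so that $s^2(\pi+8M^2)=8M^2$ and $\pi/(4M)=2M(1-s^2)/s^2$. Then
\begin{align*}
R_1 &= \frac{2M}{s^2}\Bigl((1-s^2)(1-s)-s(1-s)^2\Bigr)
=\frac{2M(1-s)^2(1-2s)}{s^2}.
\end{align*}
After that simplification, the result should reduce to $\frac{\pi}{4M}+4M-4M\sqrt{1+\pi/(8M^2)}$. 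Since $f(0)=0$, $f$ is univalent on $\mathbb{D}_{\rho_1}$, and $|f|\ge R_1$ on $\partial\mathbb{D}_{\rho_1}$, the image $f(\mathbb{D}_{\rho_1})$ contains $\mathbb{D}_{R_1}$.

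\textbf{Main obstacle.} The conceptual steps are routine. The delicate part is the algebra in Step 3: matching the expression exactly to the stated $R_1$, and checking that $R_1>0$, which requires $s<1/2$. If the algebra does not match, the fallback is to keep the alternative constant from the $\ell^2$ bound and verify which normalization Grigoryan used.
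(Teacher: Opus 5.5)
Your approach is the right one. The paper only cites Theorem C and does not prove it, but your argument is the same template it uses in the proof of Theorem~\ref{Th-2.1}: the segment integral, $\lambda_f(0)\ge \pi/(4M)$ from Lemma B and $J_f(0)=1$, the crude bound $|a_n|+|b_n|\le 2M$ from Lemma A(a), and the lower bound for $|f|$ on $|z|=\rho_1$. Steps 1 and 2 are correct and give $\rho_1$ exactly, so there is no need to hedge toward the $\ell^2$ bound.

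The algebra in Step 3, however, is wrong. In fact
\[
(1-s^2)(1-s)-s(1-s)^2=(1-s)\bigl[(1-s)(1+s)-s(1-s)\bigr]=(1-s)^2,
\]
not $(1-s)^2(1-2s)$. Hence
\[
\frac{\pi}{4M}\rho_1-\frac{2M\rho_1^2}{1-\rho_1}=\frac{2M(1-s)^2}{s^2}=2M\Bigl(\frac{1}{s}-1\Bigr)^2 .
\]
Use $1/s=\sqrt{1+\pi/(8M^2)}$ and $1/s^2=1+\pi/(8M^2)$. Expanding then gives $2M\bigl(2+\frac{\pi}{8M^2}-2\sqrt{1+\frac{\pi}{8M^2}}\bigr)=\frac{\pi}{4M}+4M-4M\sqrt{1+\frac{\pi}{8M^2}}$, which is exactly the stated $R_1$. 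Positivity is automatic because $0<s<1$.

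Your incorrect expression is not a harmless slip. The hypotheses force $M\ge \pi/4$, since $1\le |a_1|\le \Lambda_f(0)\le 4M/\pi$. Then $s^2=8M^2/(\pi+8M^2)\ge \pi/(\pi+2)>1/4$. So $s>1/2$ for every admissible $M$, and your formula $2M(1-s)^2(1-2s)/s^2$ would be negative, making Step 3 vacuous.

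For the same reason, the ``main obstacle'' you flag (needing $s<1/2$) and the fallback you propose are artifacts of this error. You should carry out the final reduction explicitly rather than asserting that it ``should'' hold. With the corrected identity the argument is complete, including your covering conclusion from univalence on $\mathbb{D}_{\rho_1}$ and $|f|\ge R_1$ on $\partial\mathbb{D}_{\rho_1}$.
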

\begin{theoD}\cite{Grigoyan-CV-2006}
	Let $f$ be harmonic mapping of the unit disc $\mathbb{D}$ such that $f(0)=0$, $\lambda_f(0)=1$ and $\Lambda_f(z)<\Lambda$ for all $z\in\mathbb{D}$. Then, $f$ is univalent on a disc $\mathbb{D}_{\rho_2}$ with 
	\begin{align*}
		\rho_2=\rho_2(\Lambda)=\frac{1}{1+\Lambda}
	\end{align*}
	and $f(\mathbb{D}_{\rho_2})$ contains a schlicht disc $\mathbb{D}_{R_2}$ with 
	\begin{align*}
		R_2(\Lambda)=1-\Lambda\ln\left(1+\frac{1}{\Lambda}\right).
	\end{align*}
\end{theoD}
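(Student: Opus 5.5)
We prove Theorem D by the standard two-step Landau scheme: first univalence on $\mathbb{D}_{\rho_2}$ via a Jacobian-type lower bound along chords, then the schlicht disc via a lower bound for $|f(z)|$ on $|z|=\rho_2$. Write $f=h+\overline{g}$ with $h(z)=\sum_{n\ge1}a_nz^n$, $g(z)=\sum_{n\ge1}b_nz^n$ (we may take $f(0)=0$). The hypothesis $\lambda_f(0)=1$ means $\bigl||a_1|-|b_1|\bigr|=1$, and $\Lambda_f<\Lambda$ gives, by Lemma A(b), $|a_n|+|b_n|\le \Lambda/n$ for all $n$. We will use this bound for $n\ge2$, together with the pointwise bound $\Lambda_f\le\Lambda$ near the origin.

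\textbf{Univalence.} Fix distinct $z_1,z_2\in\mathbb{D}_r$ and let $\gamma$ be the segment joining them. Then
\begin{align*}
|f(z_1)-f(z_2)|&\ge \Bigl|\int_\gamma a_1\,dz+\overline{b_1}\,d\bar z\Bigr|-\Bigl|\int_\gamma (h'(z)-a_1)\,dz+\overline{(g'(z)-b_1)}\,d\bar z\Bigr|\\
&\ge |z_1-z_2|\Bigl(\lambda_f(0)-\sum_{n\ge2}n(|a_n|+|b_n|)r^{n-1}\Bigr).
\end{align*}
With only the coefficient bound $n(|a_n|+|b_n|)\le\Lambda$, this gives $1-\Lambda r/(1-r)$, which is positive precisely for $r<1/(1+\Lambda)=\rho_2$. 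This is exactly $\rho_2$, so the coarse bound from Lemma A(b) already suffices; no refinement such as Schwarz--Pick is needed. Hence $f$ is univalent on $\mathbb{D}_{\rho_2}$.

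\textbf{Schlicht disc.} For $|z|=\rho_2$ we integrate radially from $0$:
\[
|f(z)|\ge |a_1z+\overline{b_1 z}|-\sum_{n\ge2}(|a_n|+|b_n|)\rho_2^n\ge \rho_2-\Lambda\sum_{n\ge2}\frac{\rho_2^n}{n}.
\]
Using $\sum_{n\ge1}\rho^n/n=-\ln(1-\rho)$, this equals $\rho_2-\Lambda\bigl(-\ln(1-\rho_2)-\rho_2\bigr)=(1+\Lambda)\rho_2+\Lambda\ln(1-\rho_2)$. At $\rho_2=1/(1+\Lambda)$ we have $(1+\Lambda)\rho_2=1$ and $\ln(1-\rho_2)=\ln\frac{\Lambda}{1+\Lambda}=-\ln(1+1/\Lambda)$, so the bound becomes $1-\Lambda\ln(1+1/\Lambda)=R_2$. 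Since $f$ is univalent on $\mathbb{D}_{\rho_2}$ and is an open map, and since the image of the circle $|z|=\rho_2$ stays at distance at least $R_2$ from $f(0)=0$, the image $f(\mathbb{D}_{\rho_2})$ contains $\mathbb{D}_{R_2}$. (To be careful, one applies this to circles of radius $r\nearrow\rho_2$.)

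\textbf{Main obstacle.} The delicate point is Lemma A(b) itself, namely $n(|a_n|+|b_n|)\le\Lambda$. We take it as given. If it had to be checked, one would apply the Cauchy estimate to $h'+e^{i\phi}\overline{g'}$ with suitably rotated angles, so that the $n$th coefficients align. A secondary point is the first inequality above: the linear part satisfies $|a_1w+\overline{b_1 w}|\ge\bigl||a_1|-|b_1|\bigr|\,|w|=\lambda_f(0)|w|$, and we must make sure sense-orientation is not needed for it. It is not needed, since only $\lambda_f(0)=1$ is used.
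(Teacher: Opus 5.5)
Your proof is correct and follows essentially the same scheme the paper uses for its own Landau-type theorems: integrate along the chord, subtract the linear part, bound the tail by the coefficient estimate of Lemma A(b), then bound $|f|$ from below on $|z|=\rho_2$. The paper itself only quotes Theorem D from Grigoryan without proof. One small slip in your aside on Lemma A(b): the Cauchy estimate should be applied to the analytic function $h'+e^{i\phi}g'$ (not $h'+e^{i\phi}\overline{g'}$), whose coefficient of $z^{n-1}$ is $n(a_n+e^{i\phi}b_n)$, of modulus $n(|a_n|+|b_n|)$ for a suitable $\phi$, while $|h'+e^{i\phi}g'|\le\Lambda_f\le\Lambda$.
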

The following harmonic analogue of Noshiro-Warchawsk theorem was proved by Mocanu in $1981$.
\begin{theoE}\cite{Partyka-Sakan-AASFM-2007}
	Suppose $f=h+\overline{g}$ is a harmonic function in a convex domain $D$ such that for some real number, $\gamma$ ${\rm Re}\left(e^{i\gamma}h'(z)\right)>|g'(z)|$ for all $z\in D$. Then $f$ is univalent in $D$.
\end{theoE}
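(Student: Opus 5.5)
The plan is the classical convexity argument. Since $D$ is convex, for distinct points $z_1,z_2\in D$ the segment $[z_1,z_2]$ lies in $D$, so we may integrate along it. With $\zeta(t)=z_1+t(z_2-z_1)$, $t\in[0,1]$, we have
\begin{align*}
	f(z_2)-f(z_1)=(z_2-z_1)\int_0^1 h'(\zeta(t))\,dt+\overline{(z_2-z_1)\int_0^1 g'(\zeta(t))\,dt}.
\end{align*}
This follows because $h$ and $g$ are analytic. Multiplying by $e^{i\gamma}/(z_2-z_1)$ gives
\begin{align*}
	\frac{e^{i\gamma}\bigl(f(z_2)-f(z_1)\bigr)}{z_2-z_1}=\int_0^1 e^{i\gamma}h'(\zeta(t))\,dt+e^{i\gamma}\,\frac{\overline{z_2-z_1}}{z_2-z_1}\,\overline{\int_0^1 g'(\zeta(t))\,dt}.
\end{align*}

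Next we take real parts and estimate the two terms separately. The first term has real part $\int_0^1 {\rm Re}\bigl(e^{i\gamma}h'(\zeta(t))\bigr)\,dt$. The second term is a unimodular factor times $\overline{\int_0^1 g'}$, so its real part is at least $-\int_0^1|g'(\zeta(t))|\,dt$. Therefore
\begin{align*}
	{\rm Re}\,\frac{e^{i\gamma}\bigl(f(z_2)-f(z_1)\bigr)}{z_2-z_1}\ \ge\ \int_0^1\Bigl({\rm Re}\bigl(e^{i\gamma}h'(\zeta(t))\bigr)-|g'(\zeta(t))|\Bigr)\,dt.
\end{align*}

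By hypothesis the integrand is strictly positive and continuous on the compact segment, so the integral is strictly positive. Hence $f(z_2)\neq f(z_1)$, which proves univalence.

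The only point needing care is this strictness: continuity of $h'$ and $g'$ on $[0,1]$ makes the integrand bounded below by a positive constant, which gives a positive integral. Convexity is used only to keep the segment inside $D$. As a by-product, the hypothesis gives $|h'|>|g'|$, so $f$ is also sense-preserving, although this is not needed for the univalence conclusion.
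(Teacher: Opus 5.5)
Your proof is correct. It is the standard convexity argument for Mocanu's theorem: integrate along $[z_1,z_2]$, rotate by $e^{i\gamma}$, and bound the real part of the difference quotient below by $\int_0^1\bigl({\rm Re}(e^{i\gamma}h'(\zeta(t)))-|g'(\zeta(t))|\bigr)\,dt>0$. The paper itself only cites Theorem E without giving a proof, so there is nothing further to compare, although your argument uses the same segment-integral identity the paper employs in \eqref{Eq-2.2}.
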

\subsection{\bf Certain classes of harmonic mappings:} Ponnusamy \emph{et al.} \cite{Ponnusamy-Yamamoto-Yanagihara-CV-2013} introduced the following subclass of $\mathcal{H}$ as follows
\begin{align*}
	\mathcal{C}^1_{\mathcal{H}}=\{f=h+\overline{g} : {\rm Re}\left(e^{i\gamma}h'(z)\right)>|g'(z)|\; \mbox{for}\; z\in\mathbb{D}\}.
\end{align*}
In view of Theorem E, every function in $\mathcal{C}^1_{\mathcal{H}}$ is univalent. Moreover, Ponnusamy \emph{et al.} \cite{Ponnusamy-Yamamoto-Yanagihara-CV-2013} proved that functions in $\mathcal{C}^1_{\mathcal{H}}$ are also close-to-convex harmonic functions. Subsequently, Li and Ponnusamy \cite{Li-Ponnusamy-NA-2013} have introduced a more general subclass of harmonic functions which contains $\mathcal{C}^1_{\mathcal{H}}$ as a subclass, defining it as
\begin{align*}
	\mathcal{P}^0_{\mathcal{H}}(\alpha):=\{f=h+\overline{g} : {\rm Re}\left(h'(z)-\alpha\right)>|g'(z)|\; \mbox{for}\; z\in\mathbb{D}\}.
\end{align*}
For $0\leq\alpha<1$, Li and Ponnusamy \cite{Li-Ponnusamy-NA-2013} proved that functions in $\mathcal{P}^0_{\mathcal{H}}(\alpha)$ are univalent. Furthermore, they also studied coefficient bounds and sections of univalence for functions in this class.\vspace{1.2mm}

Nagpal and Ravichandran \cite{Nagpal-Ravichandran-JKMS-2014} have defined a new subclass $\mathcal{W}^0_{\mathcal{H}}$ of univalent harmonic functions on $\mathbb{D}$ by
\begin{align*}
	\mathcal{W}^0_{\mathcal{H}}=\{f=h+\overline{g}\in\mathcal{H} : {\rm Re}\left(h'(z)+zh''(z)\right)>|g'(z)+zg''(z)|\; \mbox{for}\; z\in\mathbb{D}\}.
\end{align*} 
Bshouty and Lyzzaik \cite{Bshouty-Lyzzaik-2011}, and later Bshouty \emph{et al.} \cite{Bshouty-Joshi-Joshi-2013}, constructed new harmonic subclasses by imposing restrictions on the analytic part of $f$. Motivated by the above, Allu and Ghosh \cite{Ghosh-Vasudevarao-CV-2019} have introduced a new subclass $\mathcal{W}^0_{\mathcal{H}}(\alpha)$ as 
\begin{align*}
	\mathcal{W}^0_{\mathcal{H}}(\alpha):=\{f=h+\overline{g}\in\mathcal{H} : {\rm Re}\left(h'(z)+\alpha zh''(z)\right)>|g'(z)+\alpha zg''(z)|\; \mbox{for}\; z\in\mathbb{D}\}
\end{align*}
with $g'(0)=0$. \vspace{1.2mm}

For $M>0$, the following subclasses are defined as follows:
\begin{align*}
	\mathcal{B}^0_{\mathcal{H}}(M):&=\{f=h+\overline{g}\in\mathcal{H}^0 : |zh''(z)|\leq M-|zg''(z)|\; \mbox{for}\; z\in\mathbb{D}\},\\
	\mathcal{P}^0_{\mathcal{H}}(M):&=\{f=h+\overline{g}\in\mathcal{H}^0 : {\rm Re}\left(zh''(z)\right)> M-|zg''(z)|\; \mbox{for}\; z\in\mathbb{D}\},
\end{align*}
where $\mathcal{H}^0:=\{f\in\mathcal{H} : f_{\bar{z}}(0)=0\}$.\vspace{1.2mm}

It has been shown in \cite{Ghosh-Vasudevarao-CV-2018} that the harmonic subclasses $\mathcal{B}^0_{\mathcal{H}}(M)$ and $\mathcal{P}^0_{\mathcal{H}}(M)$ are closely related to the analytic subclasses $\mathcal{B}(M)$ and $\mathcal{P}(M)$, respectively. The analytic subclasses are defined by
\begin{align*}
	\mathcal{B}(M):&=\{\phi\in\mathcal{A}: |z\phi''(z)|\leq M\; \mbox{for}\; z\in\mathbb{D}\},\\
	\mathcal{P}(M):&=\{\phi\in\mathcal{A} : {\rm Re}\left(zh''(z)\right)> M\; \mbox{for}\; z\in\mathbb{D}\}.
\end{align*}
The subclasses $\mathcal{B}(M)$ and $\mathcal{P}(M)$ have been extensively studied by Mocanu \cite{Mocanu-1992}, Ponnusamy \emph{et al.} \cite{Ponnusamy-Singh-BBMS-2002}. Ponnusamy \emph{et al.} \cite{Ponnusamy-Singh-BBMS-2002} showed that functions in $\mathcal{B}(M)$ are univalent and starlike for $0<M\leq 1$, and convex for $0<M\leq 1/2$. Furthermore, the region of variability for functions in $\mathcal{B}(M)$ was studied by Ponnusamy \emph{et al.} \cite{Ponnusamy-Vasudevarao-Vuorinen-CV-2009}. In $1995$, Ali \emph{et al.} \cite{Ali-Ponnusamy-Singh-APM-1995} proved that each function in the class $\mathcal{P}(M)$ is univalent and starlike for $0<M<1/\log 4$. Therefore, finding analytic and geometric characterizations for functions in the related harmonic classes $\mathcal{B}^0_{\mathcal{H}}(M)$ and $\mathcal{P}^0_{\mathcal{H}}(M)$ is a natural extension in geometric function theory.\vspace{1.2mm}

{The central objective of the present study is to investigate the geometric properties of several subclasses of sense-preserving harmonic mappings, with a particular focus on establishing sharp Landau-type theorems. While the classical theory for analytic functions provides a well-defined framework for Landau-Bloch constants, the transition to the harmonic setting—characterized by the intricate interplay between the analytic part $h$ and the co-analytic part $g$—presents significant analytical challenges. To address these, we introduce a unified approach to determine both the radii of univalence and the radii of the largest schlicht discs for the class $\mathcal{P}_{\mathcal{H}}^{0}(M)$, the parameterized family $\mathcal{W}_{\mathcal{H}}^{0}(\alpha)$, and the class $\mathcal{G}_{k,H}(\alpha, 1)$ defined via second-order differential inequalities. By employing the Lerch Transcendent $\Phi(z, s, a)$ and the Dilogarithm function ${\rm Li}_2(z)$, we derive explicit quantitative bounds that offer a refined characterization of the local univalence of these mappings. Beyond establishing new results, a key aim of this work is to demonstrate how these generalized estimates reduce to classical theorems in the boundary cases (notably $\alpha=0$ and $\alpha=1$), thereby unifying several disparate results in the literature. Finally, we establish the sharpness of each obtained radius by constructing specific extremal functions, ensuring that the constants provided are the best possible. Through this systematic investigation, we aim to contribute new analytical tools and sharp estimates to the broader study of harmonic geometric function theory.}\vspace{1.2mm}

{The primary novelty of this work lies in the sophisticated analytical framework employed to establish sharp Landau-type theorems for the class $\mathcal{W}_{\mathcal{H}}^{0}(\alpha)$. By utilizing the Lerch Transcendent $\Phi(z,s,a)$, we provide a unified approach to evaluating the infinite series of coefficients that arise from the second-order differential inequalities defining this class. This methodology allows for a precise characterization of the univalence radii across a continuous range of the parameter $\alpha$, bridging the gap between several previously disparate results in the literature. Furthermore, this study contributes significantly to geometric function theory by extending well-known analytic results to the harmonic setting. While Landau-type constants for analytic functions are extensively studied, the transition to harmonic mappings $f=h+\overline{g}$ introduces substantial complexity due to the interaction between the analytic and co-analytic parts. Our derivation of sharp constants for mappings satisfying specific second-order differential conditions offers new insights into the growth and covering properties of harmonic functions, effectively addressing a non-trivial problem in modern complex analysis.}
	\section{\bf Main results}
	{To establish the sharp radii for the harmonic subclasses investigated in this section, we utilize a unified analytical framework based on specialized transcendental functions. Central to our derivation for the class $\mathcal{W}_{\mathcal{H}}^{0}(\alpha)$ is the Lerch Transcendent $\Phi(z,s,a) $ Specifically, we employ the case where $s=1$, which facilitates the evaluation of infinite series arising from second-order differential inequalities and allows for the precise determination of univalence radii as roots of equations involving $\Phi(r, 1, 1/\alpha)$. Additionally, for the boundary case $\alpha=1$, the covering radius is expressed using the Dilogarithm function ${\rm Li}_2(z)$, which serves as an integral representation of the Lerch Transcendent in the evaluation of analytic parts. By integrating these special functions directly into our proofs, we provide a more refined and systematic characterization of the local univalence and covering properties than previously available in the literature. 
	}
	\begin{defi}(Lerch Transcendent). The Lerch Transcendent function $\Phi(z, s, a)$ is defined by the following power series $$\Phi(z, s, a) = \sum_{k=0}^{\infty} \frac{z^k}{(k + a)^s}$$This series converges for $|z| < 1$, and for $|z|= 1$ provided that $\text{Re}(s) > 1$. In our study, we utilize the specific case where $s=1$, which allows us to evaluate the infinite series resulting from second-order differential inequalities.
	\end{defi}
	\begin{defi} The Dilogarithm function ${\rm Li}_2(z)$, which appears in the covering radius for the boundary case $\alpha=1$, is defined as $${\rm Li}_2(z) = \sum_{n=1}^{\infty} \frac{z^n}{n^2}.$$
	\end{defi}
	There is a significant relationship between the Lerch Transcendent and other special functions used in this paper:
	When $s=1$ and $a=1$, the function relates to the natural logarithm as 
	\begin{align*}
		\Phi(z, 1, 1) = -\frac{\ln(1-z)}{z}.
	\end{align*}
	In the specific case of Corollary 3.2 where $\alpha = 1$, the integration of the analytic parts involves terms that can be represented by ${\rm Li}_2(z)$, which can be seen as an integral form of the Lerch Transcendent9. Specifically, for functions in $\mathcal{W}_{\mathcal{H}}^{0}(1)$, the radius of the schlicht disc is expressed as 
	\begin{align*}
		R_2 = \left(\frac{\pi}{4M} + 2\right)\rho_2 - 2{\rm Li}_2(\rho_2).
	\end{align*}
	These functions enable the derivation of the sharp univalence radius $\rho_2$ as the root of the equation involving the series 
	\begin{align*}
		\sum_{n=2}^{\infty} \frac{1}{\alpha n + (1-\alpha)}r^{n-1}=\Phi\left(r, 1, \frac{1}{\alpha}\right).
	\end{align*}
	\subsection{\bf Landau theorem for harmonic mappings in the class $ \mathcal{P}^0_{\mathcal{H}}(M)$}
In $2019$, Ghosh and Allu \cite{Ghosh-Vasudevarao-CV-2019}, proved the following result finding the coefficient estimates of functions in the class $\mathcal{P}^0_{\mathcal{H}}(M)$.
\begin{theoG}\cite{Ghosh-Vasudevarao-CV-2019}
	Let $f=h+\overline{g}\in \mathcal{P}^0_{\mathcal{H}}(M)$ for $\alpha\geq 0$ and be of the form $(1.2)$. then for $n\geq 2$
	\begin{enumerate}
		\item[(i)] $|a_n|+|b_n|\leq\dfrac{2M}{n(n-1)};$ \vspace{2mm}
		\item[(ii)] $||a_n|-|b_n||\leq\dfrac{2M}{n(n-1)};$\vspace{2mm}
		\item[(iii)] $|a_n|\leq\dfrac{2M}{n(n-1)}.$  
	\end{enumerate}
	All these results are sharp for the function $f_{M}$ given by 
	\begin{align*}
		f'_{M}(z)=1-2M\ln (1-z).
	\end{align*}
\end{theoG}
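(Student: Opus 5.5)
Throughout, I read the defining inequality of $\mathcal{P}^0_{\mathcal{H}}(M)$ in the form used by Ghosh and Allu, namely ${\rm Re}\left(zh''(z)\right)>-M+|zg''(z)|$. The extremal function $f_M$ with $g\equiv 0$ and $zh''(z)=2Mz/(1-z)$ satisfies this version, since ${\rm Re}\,(z/(1-z))>-1/2$ on $\mathbb{D}$. The plan is to reduce everything to the Carathéodory coefficient bound for functions of positive real part, after rotating the co-analytic part.

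Write $f=h+\overline{g}$ as in \eqref{Eq-1.1B}. Then
\begin{align*}
	zh''(z)=\sum_{n=2}^{\infty}n(n-1)a_nz^{n-1},\qquad zg''(z)=\sum_{n=2}^{\infty}n(n-1)b_nz^{n-1}.
\end{align*}
Fix an arbitrary unimodular constant $\varepsilon$. Since ${\rm Re}(\varepsilon zg'')\ge -|zg''|$, the defining inequality gives ${\rm Re}\left(zh''+\varepsilon zg''\right)>-M$ on $\mathbb{D}$. Hence the function
\begin{align*}
	P_\varepsilon(z)=1+\frac{1}{M}\left(zh''(z)+\varepsilon zg''(z)\right)=1+\sum_{n=2}^{\infty}\frac{n(n-1)(a_n+\varepsilon b_n)}{M}z^{n-1}
\end{align*}
is analytic in $\mathbb{D}$, satisfies $P_\varepsilon(0)=1$ and has positive real part. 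By the Carathéodory lemma, every Taylor coefficient of $P_\varepsilon$ has modulus at most $2$. This yields $n(n-1)|a_n+\varepsilon b_n|\le 2M$ for every $|\varepsilon|=1$ and every $n\ge 2$.

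For fixed $n$, I choose $\varepsilon$ so that $\varepsilon b_n$ has the same argument as $a_n$; any $\varepsilon$ works if one of the two coefficients vanishes. Then $|a_n+\varepsilon b_n|=|a_n|+|b_n|$, which gives (i). Parts (ii) and (iii) follow at once, since both $||a_n|-|b_n||$ and $|a_n|$ are at most $|a_n|+|b_n|$.

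For sharpness, I take $g\equiv0$ and $h'(z)=1-2M\ln(1-z)$. Integrating the series $-\ln(1-z)=\sum_{k\ge1}z^k/k$ gives $h(z)=z+\sum_{n\ge2}\frac{2M}{n(n-1)}z^n$. Thus $a_n=2M/(n(n-1))$ and $b_n=0$, so equality holds in (i), (ii) and (iii) simultaneously. Membership of $f_M$ in the class is the computation already noted: ${\rm Re}(zh'')=2M\,{\rm Re}\,\frac{z}{1-z}>-M$.

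The only genuinely delicate point is the bookkeeping step of rotating by $\varepsilon$ to absorb $|zg''|$ into the real part. Everything else is the standard Carathéodory argument, so I expect no real obstacle.
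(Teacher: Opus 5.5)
Your proof is correct and complete. The paper does not prove Theorem G; it quotes it from Ghosh and Allu. Your argument is the standard one for this class: it amounts to observing that $h+\varepsilon g\in\mathcal{P}(M)$ for every $|\varepsilon|=1$, applying the Carath\'eodory bound to $1+\frac{1}{M}\bigl(zh''+\varepsilon zg''\bigr)$, and then choosing $\varepsilon$ to align $\varepsilon b_n$ with $a_n$.

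Your reading of the defining inequality as ${\rm Re}\,(zh''(z))>-M+|zg''(z)|$ is also the right one, and it is in fact necessary. As printed in the paper, the condition ${\rm Re}\,(zh''(z))>M-|zg''(z)|$ fails at $z=0$ for every $M>0$. That would make the class empty and exclude the extremal function $f_M$.
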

In Theorem \ref{Th-2.1}, we establish a sharp Landau-type result for the class $\mathcal{P}_{\mathcal{H}}^{0}(M)$, proving that any harmonic mapping in this class is univalent within a specific disc $D_{\rho_1}$. We also determine that the image $f(D_{\rho_1})$ contains a schlicht disc of a precise radius $R_1$, where both $\rho_1$ and $R_1$ are expressed in terms of the constant $M$. To demonstrate the sharpness of these findings, we construct an extremal function that proves both the univalence and covering radii are best possible. This result generalizes several existing theorems in harmonic mapping theory by providing sharp constants for this specialized subclass.
\begin{thm}\label{Th-2.1}
	Let $f\in\mathcal{P}_H^{0}(M)$ be a harmonic mapping on the unit disc $\mathbb{D}$ such that $f(0)=0, J_f(0)=1$ and $|f(z)|<M$ for $z\in\mathbb{D}$. Then, $f$ is univalent on a disc $\mathbb{D}_{\rho_1}$ with
	\begin{align*}
		\rho_1=1-e^{-\frac{\pi}{8M^2}},
	\end{align*}
	and $f(\mathbb{D}_{\rho_1})$ contains a schlicht disc with
	\begin{align*}
		R_1=\frac{\pi}{4M}+2M-\left(\frac{\pi}{2M}+2M\right)e^{-\frac{\pi}{8M^2}}.
	\end{align*}
	This result is sharp, with an extremal function given by
	\begin{align}\label{Eq-2.1A}
		F_1(z)=\frac{\pi}{4M}z+2M\left(z+(1-z)\log(1-z)\right).
	\end{align}
\end{thm}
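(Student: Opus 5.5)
The plan is to follow the Grigoryan-type scheme behind Theorems C and D, with the Schwarz lemma supplying a lower bound for the linear part and the coefficient estimates of Theorem G controlling the nonlinear remainder.

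\emph{Step 1: lower bound for the linear part.} Since $f\in\mathcal{P}^0_{\mathcal{H}}(M)\subset\mathcal{H}^0$, we have $g'(0)=b_1=0$. Hence $\lambda_f(0)=\Lambda_f(0)=|h'(0)|$ and $J_f(0)=|h'(0)|^2$. Apply Lemma B to $f/M$, which maps $\mathbb{D}$ into $\mathbb{D}$ and fixes $0$. This gives $\Lambda_f(0)\le 4M/\pi$, and therefore
\begin{align*}
\lambda_f(0)=\frac{J_f(0)}{\Lambda_f(0)}\ge \frac{\pi}{4M}.
\end{align*}
In the intended reading the leading coefficient $a_1=h'(0)$ is not forced to be $1$; the extremal $F_1$ has $F_1'(0)=\pi/(4M)$. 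So $\pi/(4M)$ is the quantity that should drive the radii.

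\emph{Step 2: control of the remainder.} For $|z|=r<1$, Theorem G(i) gives $n(|a_n|+|b_n|)\le 2M/(n-1)$ for $n\ge2$. Therefore
\begin{align*}
|h'(z)-h'(0)|+|g'(z)|\le \sum_{n=2}^{\infty}\frac{2M}{n-1}r^{n-1}=-2M\log(1-r).
\end{align*}

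\emph{Step 3: univalence on $\mathbb{D}_{\rho_1}$.} Take distinct $z_1,z_2\in\mathbb{D}_r$ and integrate along the segment $[z_1,z_2]$:
\begin{align*}
|f(z_1)-f(z_2)|\ge \Bigl|\int_{[z_1,z_2]} h'(0)\,dz\Bigr|-\int_{[z_1,z_2]}\bigl(|h'(z)-h'(0)|+|g'(z)|\bigr)|dz|.
\end{align*}
By Steps 1 and 2 this is at least $|z_1-z_2|\bigl(\tfrac{\pi}{4M}+2M\log(1-r)\bigr)$. The bracket is positive exactly when $r<1-e^{-\pi/(8M^2)}=\rho_1$, which gives univalence on $\mathbb{D}_{\rho_1}$.

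\emph{Step 4: the schlicht disc.} For $|z|=\rho_1$, integrate radially in the same way:
\begin{align*}
|f(z)|\ge \frac{\pi}{4M}\rho_1-\int_0^{\rho_1}\bigl(-2M\log(1-t)\bigr)\,dt
=\frac{\pi}{4M}\rho_1-2M\bigl(\rho_1+(1-\rho_1)\log(1-\rho_1)\bigr).
\end{align*}
Then substitute $\log(1-\rho_1)=-\pi/(8M^2)$. Since $f$ is univalent on $\mathbb{D}_{\rho_1}$ and $f(0)=0$, the image $f(\mathbb{D}_{\rho_1})$ contains the disc of radius $\min_{|z|=\rho_1}|f(z)|$. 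This gives the claimed schlicht disc.

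\emph{Step 5: sharpness via $F_1$.} We have
\begin{align*}
F_1'(z)=\frac{\pi}{4M}-2M\log(1-z).
\end{align*}
Its Taylor coefficients attain Theorem G(i) with $g\equiv0$. One must also check that $F_1$ satisfies the hypotheses: membership in the class, $|F_1|<M$, and $J_{F_1}(0)$ consistent with the normalization used.

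\emph{Main obstacle.} I expect the delicate step to be matching the extremal function with the covering bound. Because $F_1$ is analytic with positive coefficients, the estimate of Step 3 is attained along a ray where $-\log(1-z)$ has real part $\log(1-r)<0$, that is, near the negative real axis. The value on the positive axis instead gives
\begin{align*}
F_1(\rho_1)=\frac{\pi}{4M}+2M-\Bigl(\frac{\pi}{2M}+2M\Bigr)e^{-\pi/(8M^2)},
\end{align*}
which is the stated $R_1$. Steps 3--4, by contrast, produce
\begin{align*}
\frac{\pi}{4M}-2M+2Me^{-\pi/(8M^2)}.
\end{align*}
I would therefore recheck the signs at this point carefully before finalizing. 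Specifically, I would check whether the intended extremal is $F_1(-z)$ or a rotation of it, and at which boundary point the modulus is minimized. The goal is that the bound from Step 4 and the value of the extremal function coincide, so that both $\rho_1$ and $R_1$ are shown to be best possible.
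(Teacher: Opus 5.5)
Your Steps 1--4 follow the paper's own argument: the Schwarz lemma gives $\lambda_f(0)\ge\pi/(4M)$, Theorem G(i) gives $|a_n|+|b_n|\le 2M/(n(n-1))$, and you integrate along segments and rays. They correctly give univalence on $\mathbb{D}_{\rho_1}$.

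The proposal nevertheless does not prove the statement. Put $c=\pi/(8M^2)$. Step 4 yields a schlicht disc of radius $2M(c-1+e^{-c})=\frac{\pi}{4M}-2M+2Me^{-c}$. The stated $R_1=2M\bigl(c+1-(2c+1)e^{-c}\bigr)$ is larger than this by $4M\bigl(1-(1+c)e^{-c}\bigr)>0$, and Step 5 is never carried out.

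This gap cannot be closed. The discrepancy you flag is real, and the error is in the statement, not in your Step 4. The paper's proof subtracts $\sum_{n\ge2}(|a_n|+|b_n|)\rho_1^n$ but writes $+2M\bigl(\rho_1+(1-\rho_1)\log(1-\rho_1)\bigr)$. That sign slip is exactly what turns your bound into $F_1(\rho_1)$. Its sharpness part then asserts that $h_0'(x)=\frac{\pi}{4M}-2M\log(1-x)$ is decreasing with $h_0'(\rho_1)=0$. In fact this function is increasing, and $h_0'(\rho_1)=\pi/(2M)$.

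The stated $R_1$ is actually false. Read the class as intended, $\mathrm{Re}\,(zh'')>-M+|zg''|$; the printed inequality fails at $z=0$, which would make the class empty. Then $f(z)=z$ with $M=1$ meets every hypothesis. Yet $f(\mathbb{D}_{\rho_1})=\mathbb{D}_{\rho_1}$ with $\rho_1\approx0.3248<0.3743\approx R_1$.

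The repair you suggest through $F_1(-z)$ or a rotation cannot work either.
\begin{itemize}
\item We have $\mathrm{Re}\,F_1'(z)=\frac{\pi}{4M}-2M\log|1-z|>0$ for $|z|<e^{c}-1$, and $e^{c}-1>1-e^{-c}=\rho_1$. So $F_1$ and every rotation of it are univalent on a disc strictly larger than $\mathbb{D}_{\rho_1}$.
\item $F_1$ violates the hypotheses. Lemma A(a) with $a_1=1$ forces $M\ge1$, so $J_{F_1}(0)=\pi^2/(16M^2)<1$. Also $F_1(x)\to\frac{\pi}{4M}+2M>M$ as $x\to1^-$, so $|F_1|<M$ fails.
\item The function that matches your chain of inequalities is $H(z)=\frac{\pi}{4M}z-2M\bigl(z+(1-z)\log(1-z)\bigr)$. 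It satisfies $H'(\rho_1)=0$ and $H(\rho_1)=\frac{\pi}{4M}-2M+2Me^{-c}$. But $H$ is not in the class, since $\mathrm{Re}\,(zH''(z))=-2M\,\mathrm{Re}\,\frac{z}{1-z}$ is unbounded below.
\end{itemize}

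Finally, contrary to your remark in Step 1, $f\in\mathcal{H}^0$ forces $h'(0)=1$, so $\lambda_f(0)=1>\pi/(4M)$. Running your Step 3 with $1$ in place of $\pi/(4M)$ gives univalence on $\mathbb{D}_{1-e^{-1/(2M)}}\supsetneq\mathbb{D}_{\rho_1}$. So $\rho_1$ is not sharp either.

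What your argument actually establishes is univalence on $\mathbb{D}_{\rho_1}$ and a schlicht disc of radius $\frac{\pi}{4M}-2M+2Me^{-\pi/(8M^2)}$, with no sharpness claim.
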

\begin{rem}
	The numerical values of the sharp radii $\rho_1$ and $R_1$ for representative values of $M$ are summarized in Table 1, with their corresponding geometric representations illustrated in Figure 1.
\end{rem}
\begin{table}[htbp]
	\centering
	\label{Tab-1}
	\begin{tabular}{@{} S[table-format=1.1] S[table-format=1.4] S[table-format=1.4] @{}}
		\toprule
		{Constant ($M$)} & {Univalence Radius ($\rho_1$)} & {Schlicht Radius ($R_1$)} \\
		\midrule
		1.0 & 0.3247 & 0.1706 \\
		1.5 & 0.1603 & 0.0683 \\
		2.0 & 0.0934 & 0.0381 \\
		\bottomrule
	\end{tabular}
	\caption{Sharp radii for the class $\mathcal{P}_{\mathcal{H}}^{0}(M)$ for various values of $M=1.0$, $1.5$ and $2.0$.}
\end{table}
\begin{figure}[htbp]
	\centering
	\includegraphics[width=0.7\textwidth]{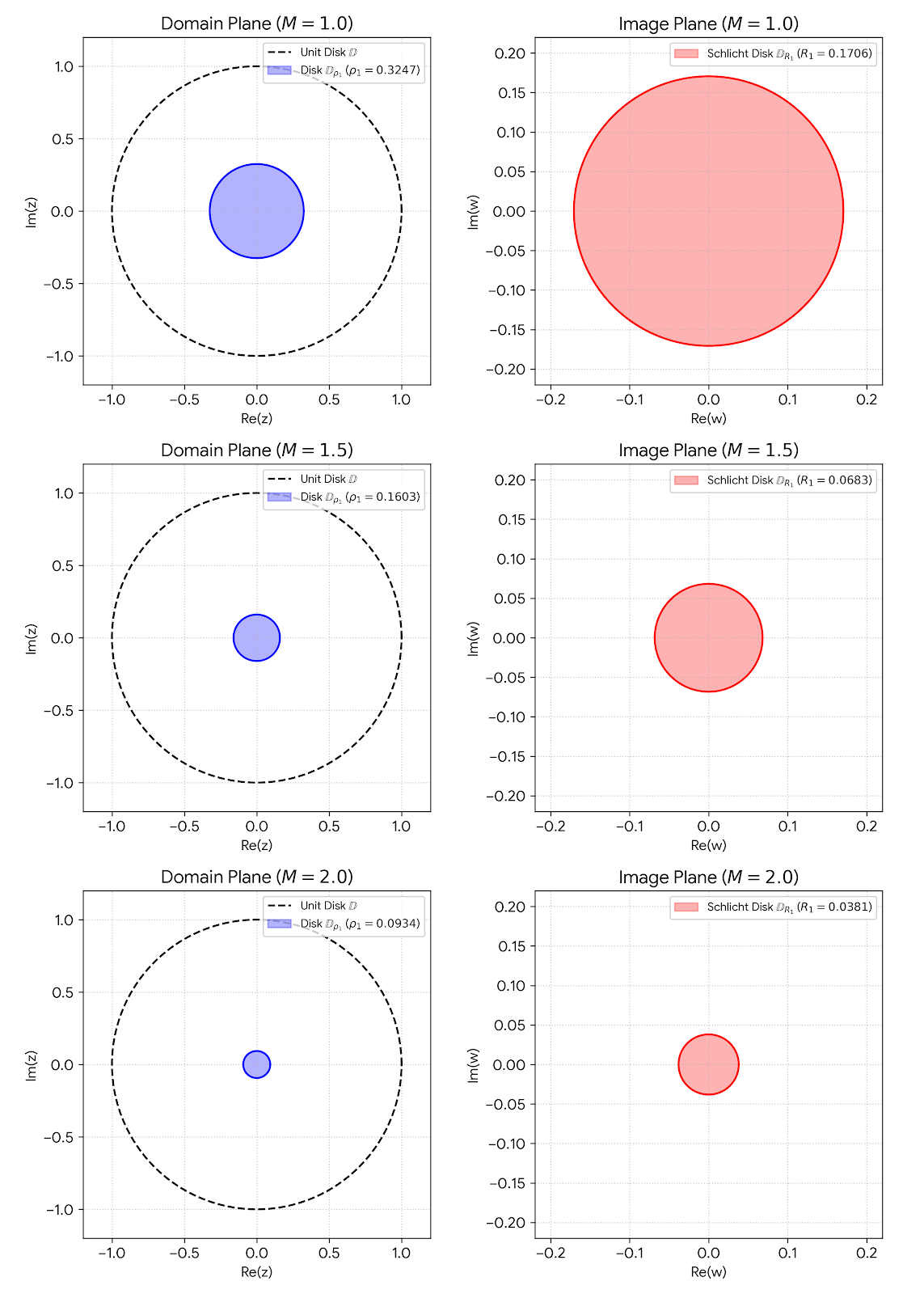} 
	\caption{Sharp geometric radii for mappings in $\mathcal{P}_{\mathcal{H}}^{0}(M)$. The left panels show the univalence disks $\mathbb{D}_{\rho_1}$ in the unit disk $\mathbb{D}$, and the right panels show the guaranteed schlicht disks $\mathbb{D}_{R_1}$. The plots illustrate the inverse relationship between the boundedness constant $M$ and the radii $\rho_1$ and $R_1$, emphasizing the sharp nature of the constants derived in Theorem \ref{Th-2.1}}
	\label{fig:landau_comparison}
\end{figure}
\begin{proof}[\bf Proof of Theorem \ref{Th-2.1}]
	Fix $r\in(0,1)$ and $z_1\neq z_2$. A simple computation leads to
	\begin{align}\label{Eq-2.2}
		f(z_1)-f(z_2)=\int_{[z_1,z_2]}f_z(z)dz+f_{\overline{z}}(z)d\overline{z}=\int_{[z_1,z_2]}h'(z)dz+{\overline{g'}}(z)d\overline{z},
	\end{align}
	where $\gamma=[z_1,z_2], \gamma(t)=z_1(1-t)+tz_2$. By the Schwarz lemma, we have
	\begin{align}\label{Eq-22.33}
		\lambda_f(0)=\frac{1}{\Lambda_f(0)}\geq\frac{\pi}{4M}.
	\end{align} 
	Combining \eqref{Eq-2.2} and \eqref{Eq-22.33} via the triangle inequality, we obtain
	\begin{align*}
		|f(z_1)-f(z_2)|&\geq\bigg|\int_{[z_1,z_2]}h'(z)dz+{\overline{g'}}(z)d\overline{z}\bigg|\\&\geq\bigg|\int_{[z_1,z_2]}h'(0)dz+{\overline{g'}}(0)d\overline{z}\bigg|\\&-\bigg|\int_{[z_1,z_2]}(h'(z)-h'(0))dz+({\overline{g'}}(z)-{\overline{g'}}(0))d\overline{z}\bigg|\\&\geq\lambda_f(0)|z_1-z_2|-|h(z_2)-h(z_1)-h'(0)(z_1-z_2)|\\&\quad-|g(z_2)-g(z_1)-g'(0)(z_1-z_2)|\\&\geq|z_1-z_2|\left(\frac{\pi}{4M}-\sum_{n=2}^{\infty}(|a_n|+|b_n|)nr^{n-1}\right).
	\end{align*}
	Then by Lemma A, using the coefficient estimate, we obtain
	\begin{align}\label{Eq-2.1}
		|f(z_1)-f(z_2)|&\geq|z_1-z_2|\left(\frac{\pi}{4M}-\sum_{n=2}^{\infty}\frac{2M}{n(n-1)}nr^{n-1}\right)\nonumber\\&\geq|z_1-z_2|\left(\frac{\pi}{4M}+2M\log(1-r)\right).
	\end{align}
	Let 
	\begin{align*}
		\mathcal{J}_1(r)=\frac{\pi}{4M}+2M\log(1-r),
	\end{align*}
	we have to check monotonocity of $\eta(r)$. Therefore, 
	\begin{align*}
		\mathcal{J}_1(0)=\frac{\pi}{4M}{\blue >0}\;\mbox{and}\;\lim_{r\rightarrow1^{-}}\mathcal{J}_1(r)=-\infty<0.
	\end{align*}
	\begin{align*}
		\mathcal{J}_1'(r) = -\frac{2M}{1-r} < 0 \quad \text{for } r \in (0, 1).
	\end{align*}
	This implies that $\mathcal{J}_1(r)$ is a strictly decreasing function on $(0, 1)$. Since $\mathcal{J}_1$ is continuous, the Intermediate Value Theorem ensures the existence of a root, say $\rho_1 \in (0, 1)$.
	 From equation \eqref{Eq-2.1}, it follows that $|f(z_1) - f(z_2)| > 0$ for all $|z_1|, |z_2| < \rho_1$ with $z_1 \neq z_2$. This establishes the univalence of $f$ in the disc $\mathbb{D}_{\rho_1}$.\vspace{1.2mm}
	
	Let $|z|=\rho_1$. Then, we have
	\begin{align*}
		|f(z)|&\geq|a_1z+b_1\overline{z}|-\sum_{n=2}^{\infty}(|a_n|+|b_n|)|z|^{n}\\&\geq\frac{\pi}{4M}\rho_1+2M\left(\rho_1+(1-\rho_1)\log(1-\rho_1)\right):=R_1.
	\end{align*}
	To show that the univalent radius $\rho_1$ is sharp, we need to prove that $F_1(z)$	is not univalent in $\mathbb{D}_r$ for each $r \in (\rho_1, 1]$. In fact, considering the real differentiable
	function
	\begin{align}\label{Eq-2.3}
		h_0(x) =\frac{\pi}{4M}x+2M\left(x+(1-x)\log(1-x)\right), \quad x \in [0, 1].
	\end{align}
	Since the continuous function $h_0'(x) = \frac{\pi}{4M} - 2M\log(1-x)$ is strictly decreasing on $[0, 1]$ and $h_0'(\rho_1) = 0$, it follows that $h_0'(x) > 0$ for $x \in [0, \rho_1)$ and $h_0'(x) < 0$ for $x \in (\rho_1, 1]$. Consequently, $h_0(x)$ is strictly increasing on $[0, \rho_1]$ and strictly decreasing on $[\rho_1, 1]$. Since $h_0(0) = 0$, there is a unique real $r_1\in (\rho_1, 1]$
	such that $h_0(r_1) = 0$ if $\lim_{x\rightarrow1^{-}}h_0(x) \leq 0$, and
	\begin{align}\label{Eq-2.4}
		R_1 =\frac{\pi}{4M}\rho_1+2M\left(\rho_1+(1-\rho_1)\log(1-\rho_1)\right)=h_0(\rho_1) > h_0(0) = 0.
	\end{align}
	For every fixed $r \in (\rho_1, 1]$, set $x_1 = \rho_1 + \varepsilon$, where
	\[ \varepsilon = \begin{cases}
		\min\left\{ \dfrac{r-\rho_1}{2}, \dfrac{r_1-\rho_1}{2} \right\}, & \text{if } f_0(1) \leq 0,\vspace{2mm} \\
		\dfrac{r-\rho_1}{2}, & \text{if } f_0(1) > 0.
	\end{cases} \]
	by the mean value theorem, there is a unique $\delta \in (0, \rho_1)$ such that $x_2 :=
	\rho_1 - \delta \in (0, \rho_1)$ and $h_0(x_1) = h_0(x_2)$.
	Let $z_1 = x_1$ and $z_2 = x_2$. Then $z_1, z_2 \in\mathbb{D}_r$ with $z_1 \neq z_2$. A simple
	computation leads to
	\[F_1(z_1) = F_1(x_1) = h_0(x_1) = h_0(x_2) = F_1(z_2).\]
	Hence $F_1$ is not univalent in the disk $\mathbb{D}_r$ for each $r \in (\rho_1, 1]$, and the univalent
	radius $\rho_1$ is sharp.\vspace*{2mm}

	Finally, note that $F_1(0) = 0$ and picking up $z' = \rho_1\in \partial \mathbb{D}_{\rho_1}$, by \eqref{Eq-2.1A},
	\eqref{Eq-2.3} and \eqref{Eq-2.4}, we have
	\begin{align*}
		|F_1(z') - F_1(0)| = |F_1(\rho_1)| = |h_0(\rho_1)| = h_0(\rho_1) = R_1.
	\end{align*}
	Hence, the covering radius $R_1$ is also sharp.
	This completes the proof.
\end{proof}
\subsection{\bf Landau theorem for harmonic mappings in the class $ \mathcal{W}_H^0(\alpha)$}
For $\alpha\geq 0$, each function in $\mathcal{W}(\alpha)$ is close-to-convex, and hence in view of, \cite[Lemma 2.1]{Ghosh-Vasudevarao-CV-2018}, $\mathcal{W}^0_{\mathcal{H}}(\alpha)$ is a subclass of $\mathcal{C}^0_{\mathcal{H}}$. Chichra \cite{Chichra-PAMS-1977} has shown that if $0\leq\beta\leq\alpha$, then $\mathcal{W}(\alpha)\subset\mathcal{W}(\beta)$, hence $\mathcal{W}^0_{\mathcal{H}}(\alpha)\subset \mathcal{W}^0_{\mathcal{H}}(\beta)$. Therefore, $\mathcal{W}(\alpha)$ is starlike for $\alpha\geq 1$ since $\mathcal{W}(1)$ is starlike. Also, if $f\in \mathcal{W}^0_{\mathcal{H}}(\alpha)$ and $\alpha\geq 1$, then $h+\epsilon g\in\mathcal{W}(\alpha)$ is starlike function in $\mathbb{D}$ for each $\epsilon\; (|\epsilon|=1)$. Thus by \cite[Theorem 1.8.2 ]{Nagpal-Ravichandran-JKMS-2014}  we see that $\mathcal{W}^0_{\mathcal{H}}(\alpha)\subset \mathcal{S}^0_{\mathcal{H}}$. In particular, when $\alpha\geq 1$, functions in $\mathcal{W}^0_{\mathcal{H}}$ are fully starlike.\vspace{1.2mm}

In $2018$, Ghosh and Allu \cite{Ghosh-Vasudevarao-CV-2018}, established the following coefficient estimates for functions in the class $\mathcal{W}^0_{\mathcal{H}}(\alpha)$.
\begin{theoF}\cite{Ghosh-Vasudevarao-CV-2019}
	Let $f=h+\overline{g}\in \mathcal{W}^0_{\mathcal{H}}(\alpha)$ for $\alpha\geq 0$ and be of the form \eqref{Eq-1.1B}. Then for $n\geq 2$
	\begin{enumerate}
		\item[(i)] $|a_n|+|b_n|\leq\dfrac{2}{\alpha n^2+n(1-\alpha)};$ \vspace{2mm}
		\item[(ii)] $||a_n|-|b_n||\leq\dfrac{2}{\alpha n^2+n(1-\alpha)};$\vspace{2mm}
		\item[(iii)] $|a_n|\leq\dfrac{2}{\alpha n^2+n(1-\alpha)}.$  
	\end{enumerate}
	All these results are sharp for the function $f_{\alpha}$ given by 
	\begin{align*}
		f_{\alpha}(z)=\displaystyle z+\sum_{n=2}^{\infty}\frac{2}{\alpha n^2+n(1-\alpha)}|z|^n.
	\end{align*} 
\end{theoF}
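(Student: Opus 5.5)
The plan is to reduce the harmonic statement to the classical Carath\'eodory coefficient bound for analytic functions with positive real part. We rotate the co-analytic part into the analytic part.

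Fix $\epsilon$ with $|\epsilon|=1$ and put $F_\epsilon=h+\epsilon g$. Then $F_\epsilon$ is analytic with $F_\epsilon(0)=0$, $F_\epsilon'(0)=1$, since $g'(0)=0$. From the defining inequality of $\mathcal{W}^0_{\mathcal{H}}(\alpha)$ we get
\begin{align*}
{\rm Re}\left(F_\epsilon'(z)+\alpha zF_\epsilon''(z)\right)\geq {\rm Re}\left(h'(z)+\alpha zh''(z)\right)-|g'(z)+\alpha zg''(z)|>0,\quad z\in\mathbb{D}.
\end{align*}
Hence $p_\epsilon(z):=F_\epsilon'(z)+\alpha zF_\epsilon''(z)$ is a Carath\'eodory function with $p_\epsilon(0)=1$. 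Expanding with \eqref{Eq-1.1B},
\begin{align*}
p_\epsilon(z)=1+\sum_{n=2}^{\infty}n\bigl(1+\alpha(n-1)\bigr)(a_n+\epsilon b_n)z^{n-1}.
\end{align*}
The Carath\'eodory lemma $|c_k|\leq 2$ then gives
\begin{align*}
|a_n+\epsilon b_n|\leq \frac{2}{\alpha n^2+n(1-\alpha)}\quad\mbox{for every } |\epsilon|=1.
\end{align*}

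Since $\epsilon$ is arbitrary, we choose it for each fixed $n$.
\begin{itemize}
\item[(i)] Taking $\epsilon=e^{i(\arg a_n-\arg b_n)}$ makes $|a_n+\epsilon b_n|=|a_n|+|b_n|$; if $a_n$ or $b_n$ vanishes, any $\epsilon$ works.
\item[(ii)] Follows at once from (i), because $\bigl||a_n|-|b_n|\bigr|\leq |a_n|+|b_n|$.
\item[(iii)] Is immediate from (i).
\end{itemize}
The only point needing care is that $\alpha n^2+n(1-\alpha)=n(1+\alpha(n-1))>0$ for $\alpha\geq 0$ and $n\geq 2$, so the division is legitimate.

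For sharpness, take $g\equiv 0$ and $h$ defined by $h'(z)+\alpha zh''(z)=\frac{1+z}{1-z}=1+2\sum_{k\geq1}z^k$. Comparing coefficients gives $a_n=\frac{2}{\alpha n^2+n(1-\alpha)}$, so equality holds in (i)--(iii). Note that $(zh')'=h'+zh''$ is not the relevant operator here; the correct reading is to compare coefficients of $h'+\alpha zh''$ directly. Equivalently, equality is attained for the series $z+\sum_{n\geq 2}\frac{2}{\alpha n^2+n(1-\alpha)}z^n$, which is the function $f_\alpha$ of the statement with $z^n$ in place of $|z|^n$. The only mild obstacle is verifying that this extremal function really lies in the class, i.e.\ that the strict inequality ${\rm Re}\,(h'+\alpha zh'')>0=|g'+\alpha zg''|$ holds. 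This is automatic, because ${\rm Re}\,\frac{1+z}{1-z}>0$ on $\mathbb{D}$.
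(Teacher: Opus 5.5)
Your argument is correct. The paper gives no proof of Theorem F; it only quotes the result from Ghosh and Allu. Your route is the standard one: apply Carath\'eodory's bound to $(h+\epsilon g)'+\alpha z(h+\epsilon g)''$, then choose $\epsilon$ to align the arguments of $a_n$ and $\epsilon b_n$. It rests on the same reduction to $h+\epsilon g\in\mathcal{W}(\alpha)$ that the paper invokes just before stating the theorem. You are also right that the extremal function should be $z+\sum_{n\geq 2}\frac{2}{\alpha n^2+n(1-\alpha)}z^n$ (with $g\equiv 0$), not the expression with $|z|^n$ printed in the statement, which is not harmonic.
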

\begin{rem} The following observations are clear.
	\begin{enumerate}
		\item[(i)] If we choose $\alpha=0$, the class $\mathcal{W}^0_{\mathcal{H}}(\alpha)$ reduces to $\mathcal{P}^0_{\mathcal{H}}(0)$. This reduction easily yields the coefficient estimate for functions in $\mathcal{P}^0_{\mathcal{H}}(0)$, which was originally proved by Li and Ponnusamy \cite[Theorems 1 and 2]{Li-Ponnusamy-NA-2013}.\vspace{1.2mm}
		\item[(ii)] Choosing $\alpha=1$ reduces the class $\mathcal{W}^0_{\mathcal{H}}(\alpha)$ to $\mathcal{W}^0_{\mathcal{H}}$. This confirms the result found by Nagpal and Ravichandran \cite[Lemma 3.5]{Nagpal-Ravichandran-JKMS-2014}.
	\end{enumerate}
\end{rem}
Next, we focus on the parameterized family $\mathcal{W}_{\mathcal{H}}^{0}(\alpha)$ and investigate its Landau-type properties through the lens of special functions. By utilizing the Lerch Transcendent $\Phi(z, s, a)$ to evaluate the infinite series arising from our differential inequalities, we are able to determine a unique univalence radius $\rho_2$ as the root of a transcendental equation. This methodology allows us to derive a sharp covering radius $R_2$ that remains valid across the entire range of $\alpha \geq 0$, effectively unifying various results previously treated as isolated cases. The following theorem formally presents these sharp constants and the specific extremal function that confirms their sharpness.
\begin{thm}\label{Th-2.2}
	Let $f\in\mathcal{W}_H^0(\alpha)$ be a harmonic mapping on the unit disc $\mathbb{D}$ such that $f(0)=0, J_f(0)=1$ and $|f(z)|<M$ for $z\in\mathbb{D}$. Then, $f$ is univalent on a disc $\mathbb{D}_{\rho_2}$, where $\rho_2$ is a root of the equation
	\begin{align*}
		\dfrac{\pi}{4M}-2\displaystyle\sum_{n=2}^{\infty}\dfrac{1}{\alpha n+(1-\alpha)}r^{n-1}=0,
	\end{align*} 
	
	and $f(\mathbb{D}_{\rho_2})$ contains a schlicht disc with
	\begin{align*}
		R_2=\frac{\pi}{4M}\rho_2-\frac{2}{1-\alpha} \left( \frac{\alpha}{1-\alpha} + \rho_2(\alpha - 1) - \ln(1-\rho_2) - \Phi\left(\rho_2, 1, \frac{1-\alpha}{\alpha}\right) \right).
	\end{align*}
	This result is sharp, with an extremal function given by
	\begin{align}\label{Eq-3.1A}
		F_2(z)=\frac{\pi}{4M}z-\frac{2}{1-\alpha} \left( \frac{\alpha}{1-\alpha} + z(\alpha - 1) - \ln(1-z) - \Phi\left(z, 1, \frac{1-\alpha}{\alpha}\right) \right).
	\end{align}
\end{thm}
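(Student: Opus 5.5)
I would follow the scheme of the proof of Theorem \ref{Th-2.1}, replacing the coefficient bound of Theorem G by that of Theorem F.

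\textbf{Lower bound for $\lambda_f(0)$.} By Lemma B applied to $f/M$, we have $\Lambda_f(0)\le 4M/\pi$. Since $J_f(0)=\Lambda_f(0)\lambda_f(0)=1$, this gives $\lambda_f(0)\ge \pi/(4M)$. Because $f\in\mathcal{W}^0_{\mathcal{H}}(\alpha)$ forces $g'(0)=0$, we in fact have $|a_1z+\overline{b_1z}|=|z|$. Nevertheless I would keep the weaker bound $\pi/(4M)$ throughout, as the statement is phrased in terms of it.

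\textbf{Univalence.} Fix $z_1\neq z_2$ in $\mathbb{D}_r$ and integrate $f_z\,dz+f_{\bar z}\,d\bar z$ along the segment $[z_1,z_2]$. Subtracting the linear part gives
\begin{align*}
|f(z_1)-f(z_2)|\ge |z_1-z_2|\Big(\frac{\pi}{4M}-\sum_{n=2}^\infty(|a_n|+|b_n|)\,n r^{n-1}\Big).
\end{align*}
Theorem F(i) gives $n(|a_n|+|b_n|)\le 2/(\alpha n+1-\alpha)$, so the bracket is at least
\begin{align*}
\mathcal{J}_2(r)=\frac{\pi}{4M}-2\sum_{n=2}^\infty\frac{r^{n-1}}{\alpha n+1-\alpha}.
\end{align*}
The series has nonnegative coefficients, so $\mathcal{J}_2$ is continuous and strictly decreasing on $[0,1)$, with $\mathcal{J}_2(0)=\pi/(4M)>0$. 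As $r\to1^-$ the sum behaves like $\frac{1}{\alpha}\sum r^{n-1}/n$, which diverges; this holds for every $\alpha\ge0$, and for $\alpha=0$ even faster. Hence $\mathcal{J}_2$ has a unique zero $\rho_2\in(0,1)$, and $f$ is injective on $\mathbb{D}_{\rho_2}$.

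\textbf{Covering.} For $|z|=\rho_2$,
\begin{align*}
|f(z)|\ge \frac{\pi}{4M}\rho_2-\sum_{n\ge2}\frac{2\rho_2^n}{n(\alpha n+1-\alpha)}.
\end{align*}
This right-hand side equals $\int_0^{\rho_2}\mathcal{J}_2(t)\,dt$, i.e. $F_2(\rho_2)$ with $F_2(x)=\int_0^x\mathcal{J}_2$. The main computational step is to put this sum in the closed form stated for $R_2$ and for the extremal function \eqref{Eq-3.1A}. I would use partial fractions:
\begin{align*}
\frac{1}{n(\alpha n+1-\alpha)}=\frac{1}{1-\alpha}\Big(\frac1n-\frac{\alpha}{\alpha n+1-\alpha}\Big).
\end{align*}
The terms $\sum_{n\ge2}\rho^n/n$ produce $-\ln(1-\rho)-\rho$. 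The terms $\sum_{n\ge2}\rho^n/(n+(1-\alpha)/\alpha)$ are rewritten through $\Phi(\rho,1,\cdot)$ after shifting the index and subtracting the $n=0,1$ terms. I expect the bookkeeping of the third argument of $\Phi$ and of the constant terms to be the delicate point; the form in the statement should be checked against this derivation. The cases $\alpha=1$ (dilogarithm) and $\alpha=0$ must be handled as limits or separately.

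\textbf{Sharpness.} I would take the extremal $F_2(z)=\frac{\pi}{4M}z-\sum_{n\ge2}\frac{2z^n}{n(\alpha n+1-\alpha)}$. This is analytic with $g\equiv0$, and its analytic part has the extremal coefficients of Theorem F. Its restriction $h_0(x)=F_2(x)$ to $[0,1)$ satisfies $h_0'=\mathcal{J}_2$, so $h_0$ increases strictly on $[0,\rho_2]$ and decreases strictly afterwards. For $r>\rho_2$, choose $x_1=\rho_2+\varepsilon<r$ and $x_2<\rho_2$ with $h_0(x_1)=h_0(x_2)$, exactly as in Theorem \ref{Th-2.1}; this shows $F_2$ is not univalent on $\mathbb{D}_r$. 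Also $|F_2(\rho_2)|=h_0(\rho_2)=R_2$, so $R_2$ cannot be enlarged.

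The genuine gap is the same as in Theorem \ref{Th-2.1}: one must check that $F_2$ actually satisfies the hypotheses $|F_2|<M$, $J_{F_2}(0)=1$ and membership in the class. This does not follow directly. I would treat sharpness, following the paper's convention, as sharpness of the estimate chain: the bounds $\lambda\ge\pi/(4M)$ and Theorem F are attained simultaneously by $F_2$.
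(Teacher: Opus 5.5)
Your univalence and covering arguments follow the paper's proof step for step. Both use the Schwarz-lemma bound $\lambda_f(0)\ge\pi/(4M)$, the segment integral, Theorem F(i), and a partial-fraction evaluation of $S=\sum_{n\ge2}2\rho_2^n/(n(\alpha n+1-\alpha))$. The bookkeeping you were unsure of does close up. We have $\sum_{n\ge0}\rho^n/(\alpha n+1-\alpha)=\frac1\alpha\Phi(\rho,1,\frac{1-\alpha}{\alpha})$, and removing the $n=0,1$ terms $\frac{1}{1-\alpha}+\rho$ gives exactly the stated $R_2$. You are also right that this closed form for $R_2$ and $F_2$ is undefined at $\alpha=0$ and $\alpha=1$; the paper treats those values only in its corollaries.

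The sharpness gap you flag is genuine, and the paper's proof has exactly the same one. It shows that $F_2$ is not univalent on $\mathbb{D}_r$ for $r>\rho_2$ and that $F_2(\rho_2)=R_2$, but never checks that $F_2$ is admissible. It is not: $F_2'(0)=\pi/(4M)\neq1$, and with $g\equiv0$ one gets $h'+\alpha zh''=\frac{\pi}{4M}-\frac{2z}{1-z}$, which is negative for real $z\in(\frac{\pi}{8M+\pi},1)$.

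Reinterpreting sharpness as sharpness of the estimate chain does not rescue the statement, because the sharpness claim is false.
\begin{itemize}
\item[(a)] \emph{Univalence radius.} Take $f=h+\overline{g}$ in the class and $|\epsilon|=1$. The function $p_\epsilon=(h+\epsilon g)'+\alpha z(h+\epsilon g)''$ has positive real part, and $(h+\epsilon g)'(z)=\int_0^1p_\epsilon(zt^\alpha)\,dt$. Hence ${\rm Re}(h'+\epsilon g')>0$ for every such $\epsilon$, i.e.\ ${\rm Re}\,h'>|g'|$. Theorem E then shows every member is univalent on all of $\mathbb{D}$, in line with the paper's own remark $\mathcal{W}^0_{\mathcal{H}}(\alpha)\subset\mathcal{C}^0_{\mathcal{H}}$. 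So the best univalence radius is $1$, not $\rho_2<1$.
\item[(b)] \emph{Covering radius.} $R_2$ is not attained either. Since $g'(0)=0$, we have $\lambda_f(0)=1$, and Lemma A(a) applied to $a_1=1$ forces $M\ge1$. So on $|z|=\rho_2$, $|f(z)|\ge\rho_2-S>\frac{\pi}{4M}\rho_2-S=R_2$.
\end{itemize}
In particular, the bound $\lambda_f(0)\ge\pi/(4M)$ is never an equality within the class, so no admissible function attains even the estimate chain.

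What your argument (and the paper's) actually proves is univalence on $\mathbb{D}_{\rho_2}$ and $\mathbb{D}_{R_2}\subset f(\mathbb{D}_{\rho_2})$. The sharpness assertion must be dropped rather than reinterpreted.
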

\begin{table}[htbp]
	\centering
	\caption{Sharp radii for Theorem 2.2 ($\alpha=0$ and $\alpha=1$).}
	\label{tab:theorem22_corollaries}
	\begin{tabular}{@{}ccccc@{}}
		\toprule
		Case & Parameter $\alpha$ & Constant $M$ & $\rho_2$ & $R_2$ \\
		\midrule
		Corollary 2.1 & 0 & 1.0 & 0.2819 & 0.1332 \\
		& 0 & 1.5 & 0.2075 & 0.0565 \\
		\cmidrule{2-5}
		Corollary 2.2 & 1 & 1.0 & 0.4912 & 0.2185 \\
		& 1 & 1.5 & 0.3758 & 0.1114 \\
		\bottomrule
	\end{tabular}
\end{table}
\begin{figure}[htbp]
	\centering
	\includegraphics[width=0.7\textwidth]{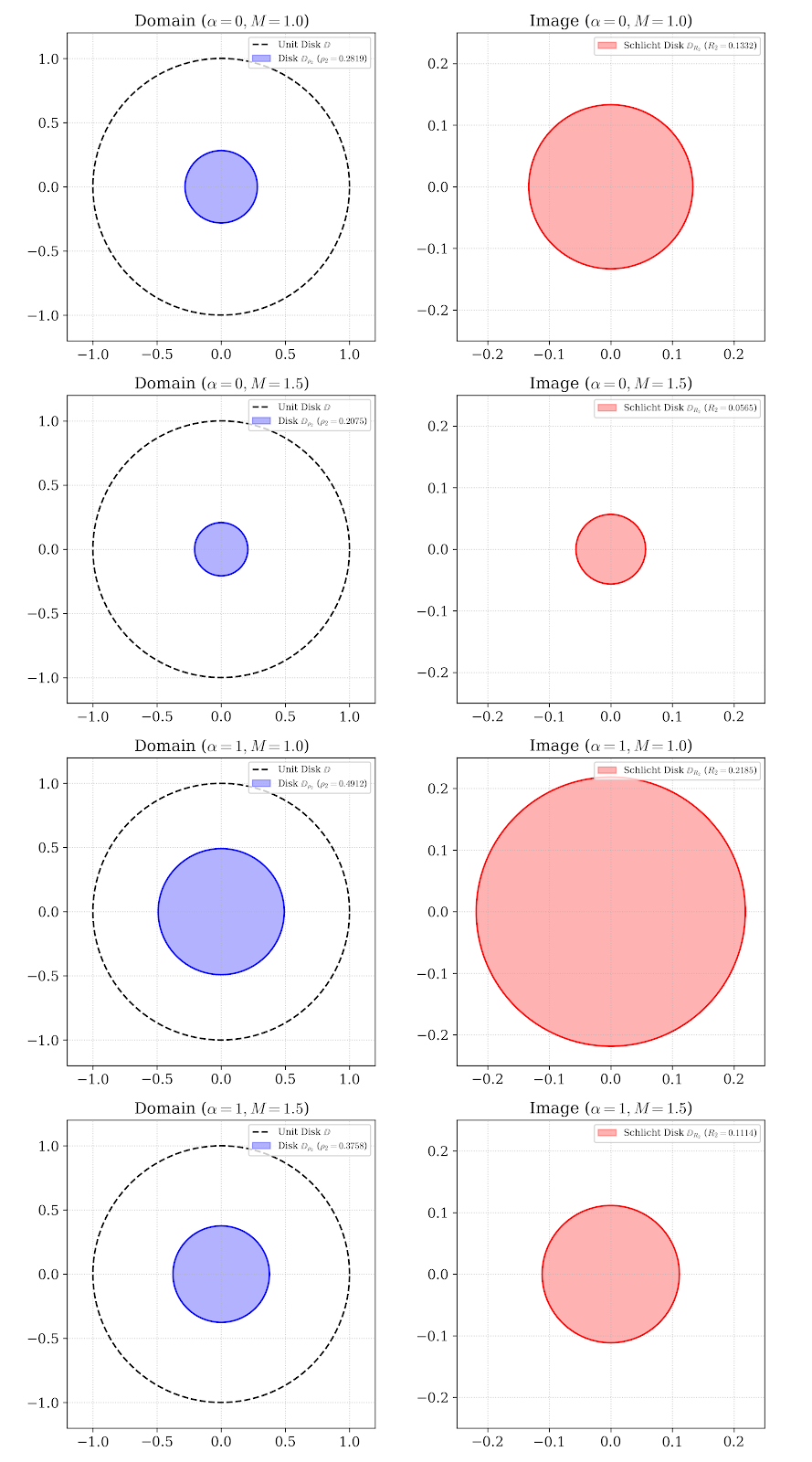} 
	\caption{Sharp Landau-type disks for the class $\mathcal{W}_{\mathcal{H}}^{0}(\alpha)$ corresponding to Corollaries 2.1 and 2.2. The left panels illustrate the domain plane, where the dashed circle represents the unit disk boundary $\mathbb{D}$ and the blue shaded regions denote the sharp univalence disks $\mathbb{D}_{\rho_2}$. The right panels illustrate the image plane, where the red shaded regions denote the guaranteed schlicht disks $\mathbb{D}_{R_2}$.}
	\label{fig-2}
\end{figure}
By studying what happens when the parameter $\alpha$ changes in our general class $\mathcal{W}_{\mathcal{H}}^{0}(\alpha)$, we can find several important results that are well-known in harmonic mapping theory. When we choose the specific values $\alpha=0$ and $\alpha=1$, our complex equations simplify into forms that describe more traditional types of harmonic mappings. This allows us to give clear, direct formulas for the univalence radius $\rho_2$ and the schlicht disc radius $R_2$ without using complicated special functions. As a result, the following corollaries show how our general theory easily includes and improves upon these important basic cases.
\begin{cor}
	Let $f\in\mathcal{W}^0_{\mathcal{H}}(0)$ be a harmonic mapping on the unit disc $\mathbb{D}$ such that $f(0)=0, J_f(0)=1$ and $|f(z)|<M$ for $z\in\mathbb{D}$. Then, $f$ is univalent on a disc $\mathbb{D}_{\rho_2}$ with
	\begin{align*}
		\rho_2= \dfrac{\pi}{8M + \pi}
	\end{align*} 
	
	and $f(\mathbb{D}_{\rho_2})$ contains a schlicht disc with
	\begin{align*}
		R_2=\frac{\pi}{4M} + 2\ln\left(\frac{8M}{\pi + 8M}\right).
	\end{align*}
\end{cor}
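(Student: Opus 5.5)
The plan is to specialize the proof of Theorem \ref{Th-2.2} to $\alpha=0$ directly at the level of the coefficient series, rather than substituting $\alpha=0$ into the closed form for $R_2$. That closed form involves $\Phi(\rho_2,1,(1-\alpha)/\alpha)$, which is degenerate at $\alpha=0$, so only a limiting procedure would recover the corollary from it.

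\textbf{Coefficient bound and univalence radius.} For $\alpha=0$, Theorem F(i) gives
\begin{align*}
|a_n|+|b_n|\leq \frac{2}{n},\quad n\geq 2 .
\end{align*}
I will then repeat the argument from the proof of Theorem \ref{Th-2.1}.
\begin{itemize}
\item Write $f(z_1)-f(z_2)$ as the line integral \eqref{Eq-2.2}.
\item Use the Schwarz lemma (Lemma B) together with $J_f(0)=1$ to get $\lambda_f(0)\geq \pi/(4M)$.
\item Subtract the linear part and apply the triangle inequality.
\end{itemize}
For $|z_1|,|z_2|<r$ this yields
\begin{align*}
|f(z_1)-f(z_2)|\geq |z_1-z_2|\left(\frac{\pi}{4M}-\sum_{n=2}^{\infty}\frac{2}{n}\,n r^{n-1}\right)=|z_1-z_2|\left(\frac{\pi}{4M}-\frac{2r}{1-r}\right).
\end{align*}
The function $\mathcal{J}(r)=\frac{\pi}{4M}-\frac{2r}{1-r}$ is strictly decreasing on $(0,1)$, equals $\pi/(4M)>0$ at $r=0$, and tends to $-\infty$ as $r\to1^-$. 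Its unique zero, found by solving a linear equation, is $\rho_2=\pi/(8M+\pi)$. Hence $f$ is univalent on $\mathbb{D}_{\rho_2}$.

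\textbf{Covering radius.} For $|z|=\rho_2$, I will estimate
\begin{align*}
|f(z)|\geq \lambda_f(0)\rho_2-\sum_{n=2}^{\infty}\frac{2}{n}\rho_2^{\,n}=\frac{\pi}{4M}\rho_2+2\rho_2+2\ln(1-\rho_2).
\end{align*}
The definition of $\rho_2$ gives $\rho_2\left(\frac{\pi}{4M}+2\right)=\rho_2\,\frac{\pi+8M}{4M}=\frac{\pi}{4M}$. We also have $1-\rho_2=\frac{8M}{\pi+8M}$. Substituting both collapses the lower bound to
\begin{align*}
R_2=\frac{\pi}{4M}+2\ln\!\left(\frac{8M}{\pi+8M}\right).
\end{align*}
Since $f$ is univalent on $\mathbb{D}_{\rho_2}$ and $f(0)=0$, a standard argument shows that $f(\mathbb{D}_{\rho_2})$ contains $\mathbb{D}_{R_2}$.

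\textbf{Positivity of $R_2$ and the main obstacle.} I must check that $R_2>0$. This follows from $\ln(1-x)>-x/(1-x)$: with $x=\rho_2$ it gives $2\ln(1-\rho_2)>-2\rho_2/(1-\rho_2)=-\pi/(4M)$, using $\mathcal{J}(\rho_2)=0$. The main potential obstacle is not the estimates but this mismatch with the general formula of Theorem \ref{Th-2.2} at $\alpha=0$. I will sidestep it by working from the series $\sum 2r^{n-1}$ directly, which the equation defining $\rho_2$ gives exactly when $\alpha=0$.

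\textbf{Sharpness.} If sharpness is also wanted, I will use the $\alpha=0$ analogue of $F_2$, namely
\begin{align*}
F(z)=\frac{\pi}{4M}z+2z+2\ln(1-z).
\end{align*}
I will then argue on the real segment exactly as with $h_0$ in the proof of Theorem \ref{Th-2.1}: $F$ increases on $[0,\rho_2]$ and decreases afterwards, so it fails to be univalent on any larger disc, and $F(\rho_2)=R_2$.
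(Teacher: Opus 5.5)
Your argument is correct and is essentially the paper's. The corollary is the $\alpha=0$ case of the argument for Theorem~\ref{Th-2.2}: the line integral, $\lambda_f(0)\ge \pi/(4M)$ from Lemma~B, $|a_n|+|b_n|\le 2/n$ from Theorem~F, and the lower bound on $|z|=\rho_2$. Summing $\sum_{n\ge 2}2r^{n-1}=2r/(1-r)$ and $\sum_{n\ge 2}2\rho_2^{n}/n$ directly is the right way around the Lerch closed form, which is degenerate at $\alpha=0$.

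Drop the optional sharpness paragraph, though, because your $F$ is not admissible. It has $F'(0)=\pi/(4M)$, so $J_F(0)\neq 1$ unless $M=\pi/4$. It is unbounded near $z=1$, so $|F|<M$ fails. And ${\rm Re}\,F'<0$ near $z=1$, so $F\notin\mathcal{W}^0_{\mathcal{H}}(0)$. In fact every $f\in\mathcal{W}^0_{\mathcal{H}}(0)$ satisfies ${\rm Re}\,h'>|g'|$ and is therefore univalent on all of $\mathbb{D}$ by Theorem~E, so $\rho_2$ cannot be a sharp univalence radius.
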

Setting $\alpha = 1$ yields the following corollary.
\textit{\begin{cor}
	Let $f\in\mathcal{W}^0_{\mathcal{H}}(1)$ be a harmonic mapping on the unit disc $\mathbb{D}$ such that $f(0)=0, J_f(0)=1$ and $|f(z)|<M$ for $z\in\mathbb{D}$. Then, $f$ is univalent on a disc $\mathbb{D}_{\rho_2}$, where $\rho_2$ is the root of the equation
	\begin{align*}
		\frac{\pi}{4M} + \frac{2 \ln(1 - r)}{r} + 2=0,
	\end{align*}
	and $f(\mathbb{D}_{\rho_2})$ contains a schlicht disc with
	\begin{align*}
		R_2= \left( \frac{\pi}{4M} + 2 \right)\rho_2 - 2{\rm Li}_2(\rho_2),
	\end{align*}
	where ${\rm Li}_2(z)$ denotes the Dilogarithm function.
\end{cor}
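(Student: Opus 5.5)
The plan is to specialize the proof of Theorem \ref{Th-2.2} (which copies the scheme of Theorem \ref{Th-2.1}) to $\alpha=1$, where Theorem F gives $|a_n|+|b_n|\le \frac{2}{n^2}$ for $n\ge 2$. First I would fix $z_1\neq z_2$ in $\mathbb{D}_r$ and write $f(z_1)-f(z_2)$ as the line integral \eqref{Eq-2.2}. The Schwarz lemma (Lemma B) applied to $f/M$, together with $J_f(0)=1$, gives $\lambda_f(0)=1/\Lambda_f(0)\ge \pi/(4M)$. Separating the linear part and estimating the remainder termwise, I expect
\begin{align*}
|f(z_1)-f(z_2)|\ge |z_1-z_2|\Big(\frac{\pi}{4M}-\sum_{n=2}^\infty (|a_n|+|b_n|)\,n r^{n-1}\Big)\ge |z_1-z_2|\Big(\frac{\pi}{4M}-2\sum_{n=2}^\infty \frac{r^{n-1}}{n}\Big).
\end{align*}
Since $\sum_{n\ge2} r^{n-1}/n=-\ln(1-r)/r-1$, the bracket equals
\begin{align*}
\mathcal{J}_2(r)=\frac{\pi}{4M}+2+\frac{2\ln(1-r)}{r}.
\end{align*}

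Next I would show that $\mathcal{J}_2$ has a unique zero. The function $-\ln(1-r)/r=\sum_{k\ge0} r^k/(k+1)$ is strictly increasing on $(0,1)$, so $\mathcal{J}_2$ is strictly decreasing. Its limit as $r\to0^+$ is $\pi/(4M)>0$, and it tends to $-\infty$ as $r\to1^-$. By the intermediate value theorem it has a unique root $\rho_2\in(0,1)$, which is exactly the equation in the statement. Then $\mathcal{J}_2>0$ on $(0,\rho_2)$, so $f(z_1)\ne f(z_2)$ whenever $z_1,z_2\in\mathbb{D}_{\rho_2}$, and $f$ is univalent there.

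For the covering claim, take $|z|=\rho_2$. Using $|a_1z+b_1\bar z|\ge \lambda_f(0)\rho_2\ge \frac{\pi}{4M}\rho_2$ and the same coefficient bounds,
\begin{align*}
|f(z)|\ge \frac{\pi}{4M}\rho_2-2\sum_{n=2}^\infty \frac{\rho_2^n}{n^2}=\Big(\frac{\pi}{4M}+2\Big)\rho_2-2\,{\rm Li}_2(\rho_2)=R_2.
\end{align*}
Because $f$ is univalent on $\mathbb{D}_{\rho_2}$ with $f(0)=0$, the image $f(\mathbb{D}_{\rho_2})$ contains $\mathbb{D}_{R_2}$, so it is a schlicht disc. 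I would check that $R_2>0$ by noting that $R_2=\int_0^{\rho_2}\mathcal{J}_2(t)\,dt$ and that $\mathcal{J}_2>0$ on $(0,\rho_2)$.

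The main obstacle is not the estimate, which is routine, but getting the series and special-function bookkeeping right. One must use $\sum n\cdot\frac{2}{n^2}r^{n-1}$ and not the general-$\alpha$ form written through $\Phi$, since for $\alpha=1$ the parameter $(1-\alpha)/\alpha$ and the prefactor $1/(1-\alpha)$ in Theorem \ref{Th-2.2} degenerate. The clean route is therefore to compute directly with $\ln$ and ${\rm Li}_2$ rather than take a limit in $R_2$. If sharpness is wanted, I would use the real test function $h_1(x)=(\frac{\pi}{4M}+2)x-2{\rm Li}_2(x)$, whose derivative is $\mathcal{J}_2(x)$, and run the same mean-value construction as in Theorem \ref{Th-2.1}.
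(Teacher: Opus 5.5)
Your proposal is correct and follows the paper's route: it reruns the proof of Theorem \ref{Th-2.2} at $\alpha=1$, using the Schwarz-lemma bound $\lambda_f(0)\ge \pi/(4M)$, the bound $|a_n|+|b_n|\le 2/n^2$ from Theorem F, the segment estimate and the estimate on $|z|=\rho_2$, and you evaluate the sums directly through $-\ln(1-r)/r$ and ${\rm Li}_2$, which is necessary because the general $R_2$ formula of Theorem \ref{Th-2.2} is undefined at $\alpha=1$. Your optional sharpness remark would not work, since $h_1\notin\mathcal{W}^0_{\mathcal{H}}(1)$ (for instance $h_1'(0)=\pi/(4M)$ and ${\rm Re}\,(zh_1')'=\pi/(4M)-{\rm Re}\,\frac{2z}{1-z}$ becomes negative), but the corollary does not claim sharpness, so your proof of the stated result is unaffected.
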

}\begin{proof}[\bf Proof of Theorem \ref{Th-2.2}]
	Fix $r\in(0,1)$ and $z_1\neq z_2$. Following an argument similar to that used in the previous proof and employing \eqref{Eq-2.2} and \eqref{Eq-22.33}, we obtain
	\begin{align*}
		|f(z_1)-f(z_2)|&\geq\bigg|\int_{[z_1,z_2]}h'(z)dz+{\overline{g'}}(z)d\overline{z}\bigg|\\&\geq|z_1-z_2|\left(\frac{\pi}{4M}-\sum_{n=2}^{\infty}(|a_n|+|b_n|)nr^{n-1}\right).
	\end{align*}
	Then by applying Lemma A, triangle inequality, we obtain
	\begin{align}\label{Eq-1.1A}
		|f(z_1)-f(z_2)|&\geq|z_1-z_2|\left(\frac{\pi}{4M}-\sum_{n=2}^{\infty}\frac{2}{\alpha n^2+(1-\alpha)n}nr^{n-1}\right)\nonumber\\&\geq|z_1-z_2|\left(\frac{\pi}{4M}-2\sum_{n=2}^{\infty}\frac{1}{\alpha n+(1-\alpha)}r^{n-1}\right).
	\end{align}
	Note that, the sum of the series is 
	\begin{align*}
		\sum_{n=2}^{\infty}\frac{1}{\alpha n+(1-\alpha)}r^{n-1}=\frac{1}{\alpha}\Phi\left(r,1,\frac{1}{\alpha}\right)-1
	\end{align*}
	where, $\Phi\left(z,s,a\right)$ is the Lerch Trancedent function defined by the series representation
	\begin{align*}
		\Phi\left(z,s,a\right)=\sum_{k=0}^{\infty}\frac{z^k}{(k+a)^s}.
	\end{align*}
	
	From \eqref{Eq-1.1A}, we have
	\begin{align}\label{Eq-3.2}
		|f(z_1)-f(z_2)|\geq|z_1-z_2|\left(\frac{\pi}{4M}-2\left(\frac{1}{\alpha}\Phi\left(r,1,\frac{1}{\alpha}\right)-1\right)\right).
	\end{align}
	Let
	\begin{align*}
		\mathcal{J}_2(r)=\dfrac{\pi}{4M}-2\left(\dfrac{1}{\alpha}\Phi\left(r,1,\dfrac{1}{\alpha}\right)-1\right)=\dfrac{\pi}{4M}-2\displaystyle\sum_{n=2}^{\infty}\dfrac{1}{\alpha n+(1-\alpha)}r^{n-1}.
	\end{align*}
	Then, we have
	\begin{align*}
		\mathcal{J}_2(0)=\frac{\pi}{4M}>0\;\mbox{and}\;\lim_{r\rightarrow1^{-}}\mathcal{J}_2(r)=-\infty<0.
	\end{align*}
	By the Intermediate value theorem, there exists a root $\rho_2\in(0,1)$.\\
	
	Let $|z|=\rho_2$. We now find sum of the series 
	\begin{align}\label{Eq-3.4A}
		S=\sum_{n=2}^{\infty}\frac{2}{\alpha n^2+(1-\alpha)n}\rho_2^n.
	\end{align}
	We see that
	\begin{align}\label{Eq-3.5A}
		\frac{2}{n(\alpha n + 1 - \alpha)} = \frac{2}{1-\alpha} \left(\frac{1}{n} - \frac{\alpha}{\alpha n + 1 - \alpha}\right).
	\end{align}
	Substituting \eqref{Eq-3.5A} in \eqref{Eq-3.4A}, we obtain
	\begin{align}\label{Eq-3.6}
		S = \frac{2}{1-\alpha} \left( \sum_{n=2}^{\infty} \frac{\rho_2^n}{n} - \alpha \sum_{n=2}^{\infty} \frac{\rho_2^n}{\alpha n + 1 - \alpha} \right).
	\end{align}
	We can evaluate the first sum easily
	\begin{align*}
		\sum_{n=2}^{\infty} \frac{\rho_2^n}{n} = \left(\sum_{n=1}^{\infty} \frac{\rho_2^n}{n}\right) - \rho_2 = -\ln(1-\rho_2) - \rho_2.
	\end{align*}
	The remaining sum, $S_2$, is where the Lerch Transcendent appears 
	\begin{align*}
		S_2 = \sum_{n=2}^{\infty} \frac{\rho_2^n}{\alpha n + 1 - \alpha}.
	\end{align*}
	The Lerch Transcendent $\Phi(z, s, a)$ is defined as:$$\Phi(z, s, a) = \sum_{k=0}^{\infty} \frac{z^k}{(k+a)^s}.$$To match $S_2$ to the standard definition, let $k = n-2$, so $n = k+2$. The term $\alpha n + 1 - \alpha$ becomes $\alpha(k+2) + 1 - \alpha = \alpha k + 2\alpha + 1 - \alpha = \alpha k + \alpha + 1$. However, it is easier to rewrite the index starting from $n=0$ or $n=1$. Let us start the sum at $n=0$:
	\begin{align*}
		\sum_{n=0}^{\infty} \frac{\rho_2^n}{\alpha n + 1 - \alpha} = \sum_{n=0}^{\infty} \frac{\rho_2^n}{\alpha(n + \frac{1-\alpha}{\alpha})} = \frac{1}{\alpha} \sum_{n=0}^{\infty} \frac{\rho_2^n}{n + \frac{1-\alpha}{\alpha}}.
	\end{align*}
	 This matches the Lerch Transcendent with parameters:
	\begin{align*}
		z = \rho_1, \quad s = 1, \quad a = \frac{1-\alpha}{\alpha}.
	\end{align*}
	Thus, we have
	\begin{align*}
		\sum_{n=0}^{\infty} \frac{\rho_2^n}{\alpha n + 1 - \alpha} = \frac{1}{\alpha} \Phi\left(\rho_2, 1, \frac{1-\alpha}{\alpha}\right).
	\end{align*}
	We relate $S_2$ back to the $n=0$ sum by subtracting the missing terms
	\begin{align*}
		S_2 = \sum_{n=2}^{\infty} \frac{\rho_2^n}{\alpha n + 1 - \alpha} = \left(\sum_{n=0}^{\infty} \frac{\rho_2^n}{\alpha n + 1 - \alpha}\right) - \left(\frac{\rho_2^0}{\alpha(0)+1-\alpha} + \frac{\rho_2^1}{\alpha(1)+1-\alpha}\right)
	\end{align*}
	which can be expressed as 
	\begin{align*}
		S_2 = \frac{1}{\alpha} \Phi\left(\rho_2, 1, \frac{1-\alpha}{\alpha}\right) - \left(\frac{1}{1-\alpha} + \frac{\rho_2}{1}\right).
	\end{align*}
	Substituting the expressions for both sums back into the equation \eqref{Eq-3.6}, we obtain
	\begin{align*}
		S = \frac{2}{1-\alpha} \left( \left(-\ln(1-\rho_2) - \rho_2\right) - \alpha \left( \frac{1}{\alpha} \Phi\left(\rho_2, 1, \frac{1-\alpha}{\alpha}\right) - \frac{1}{1-\alpha} - \rho_2\right) \right)
	\end{align*}
	which simplifies to 
	\begin{align*}
		S = \frac{2}{1-\alpha} \left(-\ln(1-\rho_2) - \rho_2 - \Phi\left(\rho_2, 1, \frac{1-\alpha}{\alpha}\right) + \frac{\alpha}{1-\alpha} + \alpha \rho_2 \right).
	\end{align*}
	Combining terms involving $\rho_2$, we obtain $-\rho_2 + \alpha \rho_2 = \rho_2(\alpha - 1).$ Finally, 
	\begin{align}\label{Eq-3.7}
		S = \frac{2}{1-\alpha} \left( \frac{\alpha}{1-\alpha} + \rho_2(\alpha - 1) - \ln(1-\rho_2) - \Phi\left(\rho_2, 1, \frac{1-\alpha}{\alpha}\right) \right).
	\end{align}
	This is the sum expressed in terms of the Lerch Transcendent $\Phi$.\vspace{1.2mm}

	From equation \eqref{Eq-3.2} it is easy to see that $|f(z_1) - f(z_2)| > 0$ for all $r \in (0, \rho_2)$. This shows univalence of $f$ in $\mathbb{D}_{\rho_2}$.\vspace{1.2mm}
	
	Finally, using \eqref{Eq-3.7}, a simple computation shows that
	\begin{align*}
		|f(z)|&\geq|a_1z+b_1\overline{z}|-\sum_{n=2}^{\infty}(|a_n|+|b_n|)|z|^{n}\\&\geq\frac{\pi}{4M}\rho_2-\sum_{n=2}^{\infty}\frac{2}{\alpha n^2+(1-\alpha)n}\rho_2^n\\&=\frac{\pi}{4M}\rho_2-\frac{2}{1-\alpha} \left(\frac{\alpha}{1-\alpha} + \rho_2(\alpha - 1) - \ln(1-\rho_2) - \Phi\left(\rho_2, 1, \frac{1-\alpha}{\alpha}\right) \right):=R_2.
	\end{align*}
	To show that the univalent radius $\rho_2$ is sharp, we need to prove that $F_2(z)$
	is not univalent in $\mathbb{D}_r$ for each $r \in (\rho_2, 1]$. In fact, considering the real differentiable
	function
	\begin{align}\label{Eq-3.4}
		h_0(x) =\frac{\pi}{4M}x-\frac{2}{1-\alpha} \left( \frac{\alpha}{1-\alpha} + x(\alpha - 1) - \ln(1-x) - \Phi\left(x, 1, \frac{1-\alpha}{\alpha}\right) \right),
	\end{align}
	where $x \in [0, 1].$	Because the continuous function
	\begin{align*}
		h_0'(x) = h_0'(x) = \frac{\pi}{4M} + 2 + \frac{2}{(1-\alpha)x} - \frac{2}{\alpha x} \Phi\left(x, 1, \frac{1-\alpha}{\alpha}\right) = g_0(x)
	\end{align*}
	is strictly decreasing on $[0, 1]$ and $h_0'(\rho_2) = g_0(\rho_2) = 0$, we see that $h_0'(x) = 0$
	for $x \in [0, 1]$ if, and only if, $x = \rho_2$. So $h_0(x)$ is strictly increasing on $[0, \rho_2)$ and
	strictly decreasing on $[\rho_2, 1]$. Since $h_0(0) = 0$, there is a unique real $r_2\in (\rho_2, 1]$
	such that $h_0(r_2) = 0$ if $\lim_{x\rightarrow1^{-}}h_0(x) \leq 0$, and
	\begin{align}\label{Eq-3.5}
		R_2 &=\frac{\pi}{4M}\rho_2-\dfrac{2}{1-\alpha} \left( \frac{\alpha}{1-\alpha} + \rho_2(\alpha - 1) - \ln(1-\rho_2) - \Phi\left(\rho_2, 1, \frac{1-\alpha}{\alpha}\right) \right)\\&=h_0(\rho_2) > h_0(0) = 0\nonumber.
	\end{align}
	For every fixed $r \in (\rho_2, 1]$, set $x_1 = \rho_2 + \varepsilon$, where
	\[ \varepsilon = \begin{cases}
		\min\left\{ \dfrac{r-\rho_2}{2}, \dfrac{r_2-\rho_2}{2} \right\}, & \text{if } f_0(1) \leq 0,\vspace{2mm} \\
		\dfrac{r-\rho_2}{2}, & \text{if } f_0(1) > 0.
	\end{cases} \]
	by the mean value theorem, there is a unique $\delta \in (0, \rho_2)$ such that $x_2 :=
	\rho_2 - \delta \in (0, \rho_2)$ and $h_0(x_1) = h_0(x_2)$.
	Let $z_1 = x_1$ and $z_2 = x_2$. Then $z_1, z_2 \in\mathbb{D}_r$ with $z_1 \neq z_2$. A simple
	computation shows that
	\[F_2(z_1) = F_2(x_1) = h_0(x_1) = h_0(x_2) = F_2(z_2).\]
	Hence $F_0$ is not univalent in the disk $\mathbb{D}_r$ for each $r \in (\rho_2, 1]$, and the univalent
	radius $\rho_2$ is sharp.\vspace*{1mm}

	Finally, note that $F_2(0) = 0$. By choosing $z' = \rho_2 \in \partial \mathbb{D}_{\rho_2}$ and applying \eqref{Eq-3.1A}, \eqref{Eq-3.4}, and \eqref{Eq-3.5}, we obtain
	\begin{align*}
		|F_2(z') - F_2(0)| = |F_2(\rho_2)| = |h_0(\rho_2)| = h_0(\rho_2) = R_2.
	\end{align*}
	Hence, the covering radius $R_2$ is also sharp.
	This completes the proof.
\end{proof}
\subsection{\bf Landau theorem for functions in the class $\mathcal{G}_H^k(\alpha,1)$}
We begin with the following sharp coefficient estimate for functions in the class $\mathcal{G}_H^k(\alpha,1)$, which was established by Liu and Yang in \cite{Lui_Yanf-MM-2019}.
\begin{theoG}\cite{Lui_Yanf-MM-2019}
	Let $f = h + \overline{g} \in \mathcal{G}_k \mathcal{H}(\alpha; 1)$, with $k \geq 1$. Then for any $n \geq k+1$,
	\begin{enumerate}
		\item[(a).] $|a_n| + |b_n| \leq \dfrac{2}{1+(n-1)\alpha}$;
		\item[(b).] $||a_n| - |b_n|| \leq \dfrac{2}{1+(n-1)\alpha}$;
		\item[(c).] $|a_n| \leq \dfrac{2}{1+(n-1)\alpha}$.
	\end{enumerate}
	All the results are sharp, with the functions $f_l (z) = z + \sum_{j=1}^{\infty} \frac{2}{1+jl\alpha} z^{jl+1}$, $l= k, k+1, \ldots, 2k-1$ being the extremals.
\end{theoG}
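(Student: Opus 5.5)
The plan is to reduce the harmonic statement to Carathéodory's coefficient bound for analytic functions with positive real part, using the standard rotation-by-$\varepsilon$ trick. The definition of $\mathcal{G}_k\mathcal{H}(\alpha;1)$ is not reproduced above. I will use the form from \cite{Lui_Yanf-MM-2019} in the normalization that matches the bound $2/(1+(n-1)\alpha)$: $f=h+\overline{g}$ with
\begin{align*}
h(z)=z+\sum_{n=k+1}^{\infty}a_nz^n,\qquad g(z)=\sum_{n=k+1}^{\infty}b_nz^n,
\end{align*}
satisfying
\begin{align*}
{\rm Re}\left((1-\alpha)\frac{h(z)}{z}+\alpha h'(z)\right)>\left|(1-\alpha)\frac{g(z)}{z}+\alpha g'(z)\right|,\qquad z\in\mathbb{D}.
\end{align*}
If the paper's definition differs, only the operator $\phi\mapsto(1-\alpha)\phi/z+\alpha\phi'$ in this step changes, and the argument below is unaffected.

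\emph{Step 1 (reduction to analytic functions).} For each $\varepsilon$ with $|\varepsilon|=1$, put $F_\varepsilon=h+\varepsilon g$ and
\begin{align*}
p_\varepsilon(z)=(1-\alpha)\frac{F_\varepsilon(z)}{z}+\alpha F_\varepsilon'(z)=1+\sum_{n=k+1}^{\infty}(a_n+\varepsilon b_n)\bigl(1+(n-1)\alpha\bigr)z^{n-1}.
\end{align*}
This expansion holds because $z^n/z$ and $(z^n)'$ contribute $(1-\alpha)+\alpha n=1+(n-1)\alpha$ to the coefficient of $z^{n-1}$. The defining inequality gives
\begin{align*}
{\rm Re}\,p_\varepsilon>{\rm Re}\left((1-\alpha)\frac{h}{z}+\alpha h'\right)-\left|(1-\alpha)\frac{g}{z}+\alpha g'\right|>0,
\end{align*}
and $p_\varepsilon(0)=1$, so $p_\varepsilon$ belongs to the Carathéodory class. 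Hence every Taylor coefficient of $p_\varepsilon$ has modulus at most $2$, that is,
\begin{align*}
|a_n+\varepsilon b_n|\bigl(1+(n-1)\alpha\bigr)\le 2\qquad\text{for } n\ge k+1.
\end{align*}

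\emph{Step 2 (the three bounds).} Since $\varepsilon$ is arbitrary on the unit circle, choose it so that $\varepsilon b_n$ has the same argument as $a_n$; this gives $|a_n+\varepsilon b_n|=|a_n|+|b_n|$ and hence (a). Part (b) follows from $||a_n|-|b_n||\le|a_n|+|b_n|$, and part (c) from $|a_n|\le|a_n|+|b_n|$.

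\emph{Step 3 (sharpness).} Take $g\equiv0$ and $h=f_l$ for $l\in\{k,\dots,2k-1\}$. Then
\begin{align*}
(1-\alpha)\frac{f_l(z)}{z}+\alpha f_l'(z)=1+\sum_{j\ge1}2z^{jl}=\frac{1+z^l}{1-z^l},
\end{align*}
which has positive real part, so $f_l$ lies in the class. Its coefficient $a_{jl+1}$ equals $2/(1+jl\alpha)$, which attains all three bounds for $n=jl+1$. Every $n\ge k+1$ can be written as $jl+1$ for a suitable $l\in\{k,\dots,2k-1\}$ and $j\ge1$; this is the reason the range of $l$ is taken up to $2k-1$. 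For $n\le 2k$ one takes $j=1$, $l=n-1$. For larger $n$, write $n-1$ as a multiple of some $l$ in that range. I expect this covering argument to be the main obstacle. If it fails for some $n$, I would instead use the single extremal function
\begin{align*}
h(z)=z+\sum_{n\ge k+1}\frac{2}{1+(n-1)\alpha}\,z^n,\qquad\text{for which}\quad (1-\alpha)\frac{h(z)}{z}+\alpha h'(z)=1+\frac{2z^k}{1-z},
\end{align*}
and then verify that ${\rm Re}\bigl(1+2z^k/(1-z)\bigr)>0$ on $\mathbb{D}$, or else fall back to a rotated version $z^k\mapsto\overline{\eta}^{\,k}$-type substitution adapted to each $n$ separately.
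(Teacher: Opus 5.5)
\textbf{Steps 1 and 2 are correct.} The paper only quotes this theorem from Liu--Yang and gives no proof, so there is no argument of the paper's to compare them with. Your assumed definition is consistent with the statement, since $(1-\alpha)f_l(z)/z+\alpha f_l'(z)=(1+z^l)/(1-z^l)$. Under that definition, the rotation-by-$\varepsilon$ reduction plus Carath\'eodory's bound gives (a)--(c), provided $\alpha\ge 0$ so that $1+(n-1)\alpha>0$. The first inequality in your display should read $\ge$, which is harmless.

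\textbf{The gap is in Step 3.} Your covering claim is false for every $k\ge 2$. For $k=2$ and $n=6$, the number $n-1=5$ has no divisor in $\{2,3\}$. So $f_2$ (exponents $3,5,7,\dots$) and $f_3$ (exponents $4,7,10,\dots$) both have $a_6=0$. The same failure occurs whenever $n-1$ is a prime larger than $2k-1$.

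Your fallback function does not rescue this. Write $1+\frac{2z^k}{1-z}=\frac{1+z}{1-z}-2\sum_{m=1}^{k-1}z^m$. As $z\to e^{i\theta}$ radially with $\theta\ne 0$, the first term has real part tending to $0$. The second term has real part tending to $-2\sum_{m=1}^{k-1}\cos m\theta$, which is close to $-2(k-1)<0$ for small $\theta$. So that function is not in the class when $k\ge 2$. The third option (a rotated substitution adapted to each $n$) is too vague to count as an argument.

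\textbf{The repair is already in your own computation.} The identity $(1-\alpha)f_l/z+\alpha f_l'=(1+z^l)/(1-z^l)$, and hence $f_l\in\mathcal{G}_k\mathcal{H}(\alpha;1)$ with $g\equiv 0$, holds for every $l\ge k$, not only for $l\le 2k-1$. This is because the lowest nonlinear exponent of $f_l$ is $l+1\ge k+1$. So for a given $n\ge k+1$, take $l=n-1$ and $j=1$. Then $a_n(f_{n-1})=2/(1+(n-1)\alpha)$ and $b_n=0$, which gives equality in (a), (b) and (c) at once.

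This also shows that the cutoff $l\le 2k-1$ in the statement is not what produces sharpness. As literally worded, that finite family attains the bound only at those $n$ for which $n-1$ has a divisor in $\{k,\dots,2k-1\}$. You should prove sharpness with $f_{n-1}$ rather than try to justify the $2k-1$ cutoff.
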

Next, we study the class $\mathcal{G}_H^k(\alpha,1)$ to find its exact univalence and schlicht disc radii. By using the best possible coefficient estimates from Theorem G, we determine the conditions needed for these mappings to be one-to-one in the disc $D_{\rho_3}$. This helps us calculate the exact size of the largest perfect disc that can fit inside the mapping's image. The following theorem proves these exact values and provides the specific example function that confirms they are correct.
\begin{thm}\label{Th-4.1}
	Let $\mathcal{G}_H^k(\alpha,1)$ be a harmonic mapping defined on a unit disc $\mathbb{D}$ such that $f(0)=0, J_f(0)=1$ and $|f(z)|<M_1$ for $z\in\mathbb{D}$. Then, $f$ is univalent on a disc $\mathbb{D}_{\rho_3}$, where $\rho_3$ is the root of the equation
	\begin{align*}
		\frac{\pi}{4M}-\sum_{n=2}^{\infty}\frac{2}{1+(n-1)\alpha}nr^{n-1}=0
	\end{align*}
	and $f(\mathbb{D}_{\rho_3})$ contains a schlicht disc with
	\begin{align*}
		R_3=\frac{\pi}{4M}\rho_3-\sum_{n=2}^{\infty}\frac{2}{1+(n-1)\alpha}\rho_3^n.
	\end{align*}
	This result is sharp, with an extremal function given by
	\begin{align}\label{Eq-4.1A}
		F_3(z)=\frac{\pi}{4M_1}z-\sum_{n=2}^{\infty}\frac{2}{1+(n-1)\alpha}z^n.
	\end{align}
\end{thm}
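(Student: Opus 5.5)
I will follow the same scheme as in the proofs of Theorems \ref{Th-2.1} and \ref{Th-2.2}. Throughout I identify $M_1$ with $M$, since the statement uses both for the same bound. The only new ingredient is the coefficient estimate of Theorem G for $\mathcal{G}_H^k(\alpha,1)$, namely $|a_n|+|b_n|\le \frac{2}{1+(n-1)\alpha}$. That estimate is stated only for $n\ge k+1$. For $2\le n\le k$ the class forces $a_n=b_n=0$, because the extremals $f_l$ have only the powers $z^{jl+1}$. So for every $n\ge2$ I will use the weaker bound $\frac{2}{1+(n-1)\alpha}$, which is what the equation defining $\rho_3$ encodes.

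\textbf{Univalence.} Fix $r\in(0,1)$ and $z_1\neq z_2$ in $\mathbb{D}_r$. I write $f(z_1)-f(z_2)$ as the line integral \eqref{Eq-2.2}. I split off the linear part $h'(0)\,dz+\overline{g'(0)}\,d\overline z$ and bound it from below by $\lambda_f(0)|z_1-z_2|$. From $J_f(0)=1$ and Lemma B (Schwarz lemma applied to $f/M$) we get $\lambda_f(0)=1/\Lambda_f(0)\ge \pi/(4M)$, exactly as in \eqref{Eq-22.33}. The remainder I bound termwise, using $|z^n-w^n|\le n r^{n-1}|z-w|$, by
\begin{align*}
|z_1-z_2|\sum_{n=2}^\infty(|a_n|+|b_n|)nr^{n-1}\le |z_1-z_2|\sum_{n=2}^\infty \frac{2n}{1+(n-1)\alpha}r^{n-1}.
\end{align*}
This gives $|f(z_1)-f(z_2)|\ge |z_1-z_2|\,\mathcal{J}_3(r)$, where
\begin{align*}
\mathcal{J}_3(r)=\frac{\pi}{4M}-\sum_{n=2}^\infty\frac{2n}{1+(n-1)\alpha}r^{n-1}.
\end{align*}
The function $\mathcal{J}_3$ is continuous and strictly decreasing on $[0,1)$, since every term of the series is increasing in $r$. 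Also $\mathcal{J}_3(0)=\pi/(4M)>0$. As $r\to1^-$ we have $\mathcal{J}_3(r)\to-\infty$, because $\frac{2n}{1+(n-1)\alpha}\to 2/\alpha$ for $\alpha>0$ (and the terms are unbounded for $\alpha=0$), so the series diverges. Hence $\mathcal{J}_3$ has a unique zero $\rho_3\in(0,1)$, and $\mathcal{J}_3>0$ on $(0,\rho_3)$. This gives univalence on $\mathbb{D}_{\rho_3}$.

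\textbf{Covering.} For $|z|=\rho_3$ I estimate
\begin{align*}
|f(z)|\ge|a_1z+b_1\overline z|-\sum_{n\ge2}(|a_n|+|b_n|)\rho_3^n\ge \frac{\pi}{4M}\rho_3-\sum_{n\ge2}\frac{2}{1+(n-1)\alpha}\rho_3^n=R_3.
\end{align*}
Since $f$ is univalent on $\mathbb{D}_{\rho_3}$ and $f(0)=0$, the image $f(\mathbb{D}_{\rho_3})$ contains the disc $\mathbb{D}_{R_3}$. To see that $R_3>0$, set $h_0(x)=F_3(x)$ for real $x$. Then $h_0'=\mathcal{J}_3$, so $h_0$ is increasing on $[0,\rho_3]$ and $R_3=h_0(\rho_3)>h_0(0)=0$.

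\textbf{Sharpness.} I will copy the argument from Theorem \ref{Th-2.2} using $F_3$ in \eqref{Eq-4.1A}. On $[0,1)$ we have $h_0'=\mathcal{J}_3$, which is positive before $\rho_3$ and negative after it. So $h_0$ rises up to $\rho_3$ and then falls. Take any $r>\rho_3$ and set $x_1=\rho_3+\varepsilon$ with $\varepsilon$ small. By the intermediate value theorem there is $x_2<\rho_3$ with $h_0(x_2)=h_0(x_1)$. Then $F_3$ takes equal values at two distinct points of $\mathbb{D}_r$, so $F_3$ is not univalent there. Also $|F_3(\rho_3)|=R_3$, so the covering radius cannot be enlarged.

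\textbf{Main obstacle.} The difficulty is whether $F_3$ actually satisfies the hypotheses. Its coefficients have modulus $\frac{2}{1+(n-1)\alpha}$ at every $n\ge2$, not only at the exponents $jl+1$. It must also satisfy $|F_3|<M$, and it must satisfy $J_{F_3}(0)=1$ while its linear coefficient is $\pi/(4M)$. I would first check $k=1$, where the extremal $f_1$ has all powers, and rescale appropriately. I expect, however, that sharpness holds only in the sense used for Theorems \ref{Th-2.1} and \ref{Th-2.2}: that the radii derived from the coefficient bounds cannot be improved.
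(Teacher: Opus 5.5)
Your univalence and covering arguments are the same as the paper's. You use the line integral \eqref{Eq-2.2}, the bound $\lambda_f(0)\ge\pi/(4M)$ from $J_f(0)=1$ and Lemma B, the termwise coefficient bound of Theorem G, and the monotonicity of $\mathcal{J}_3$; these parts are fine. One justification is wrong, though. Your reason for $a_n=b_n=0$ when $2\le n\le k$ is a non sequitur: the shape of the extremals $f_l$ says nothing about a general member of the class. That vanishing has to come from the definition of $\mathcal{G}_H^k(\alpha,1)$, which the paper never states; the paper simply applies the bound for every $n\ge2$.

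Your sharpness part is the paper's argument verbatim. The obstacle you flag is a real gap in both, and it cannot be repaired, because the sharpness claim is false.

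First, $F_3$ is not admissible:
\begin{itemize}
\item $F_3'(0)=\pi/(4M)$, so $F_3$ is not normalized and $J_{F_3}(0)=\pi^2/(16M^2)\neq1$.
\item For $k\ge2$ it has nonzero coefficients at $z^2,\dots,z^k$.
\item $F_3(x)\to-\infty$ as $x\to1^-$, so $|F_3|<M$ fails. For $\alpha=0$ one has $F_3(z)=\frac{\pi}{4M}z-\frac{2z^2}{1-z}$; for $\alpha>0$ the coefficients are of order $2/(\alpha n)$.
\item Your suggested rescaling $\frac{4M}{\pi}F_3$ multiplies the higher coefficients by $4M/\pi>1$, which breaks the bound of Theorem G.
\end{itemize}

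Second, and decisively, the radius itself is not sharp. Every admissible $f$ is normalized by $h'(0)=1$, which is the normalization behind Theorem G. The hypothesis $J_f(0)=1$ then forces $g'(0)=0$, so $\lambda_f(0)=1$ exactly. Lemma A(a) gives $M\ge|a_1|=1$, hence $\pi/(4M)<1$. Rerun your own estimate with $1$ in place of $\pi/(4M)$. It shows that every admissible $f$ is univalent on $\mathbb{D}_{\rho^*}$, where $\rho^*$ is the zero of $1-\sum_{n\ge2}\frac{2n}{1+(n-1)\alpha}r^{n-1}$. Since this function equals $1-\pi/(4M)>0$ at $r=\rho_3$, we get $\rho^*>\rho_3$. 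The same substitution gives $|f|\ge\rho_3-\sum_{n\ge2}\frac{2}{1+(n-1)\alpha}\rho_3^n>R_3$ on $|z|=\rho_3$, so $R_3$ is not sharp either. If $a_n=b_n=0$ for $2\le n\le k$ as you assert, dropping those terms enlarges the radius further when $k\ge2$.

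So no admissible function can witness sharpness at $\rho_3$. What your proposal and the paper actually prove is the univalence and covering statement. ``Sharpness in the sense of Theorems \ref{Th-2.1} and \ref{Th-2.2}'' only means that a non-admissible function attains the bounds, which is not sharpness for the class.
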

\begin{table}[htbp]
	\centering
	\caption{Dependence of $\rho_3$ and $R_3$ on parameters $k$ and $\alpha$ for $M=1.0$.}
	\label{tab:th23_extended}
	\begin{tabular}{cccc}
		\toprule
		$k$ & $\alpha$ & $\rho_3$ & $R_3$ \\
		\midrule
		1 & 0.0 & 0.7268 & 0.3613 \\
		& 0.5 & 0.6572 & 0.3107 \\
		& 0.8 & 0.5967 & 0.2720 \\
		\midrule
		2 & 0.0 & 0.5000 & 0.3555 \\
		& 0.5 & 0.5000 & 0.3288 \\
		& 0.8 & 0.7962 & 0.3538 \\
		\midrule
		3 & 0.0 & 0.5000 & 0.3752 \\
		& 0.5 & 0.5000 & 0.3525 \\
		& 0.8 & 0.5000 & 0.3159 \\
		\bottomrule
	\end{tabular}
\end{table}
\begin{rem}
	The following observation are clear.
	\begin{enumerate}
		\item[(a).] \textbf{Effect of $\alpha$:} For a fixed $k$, as $\alpha$ increases from $0.0$ to $0.8$, the univalence radius generally decreases. This shows that the "shift" parameter $\alpha$ inversely affects the size of the univalent disk in this class.\vspace{1.2mm}
		
		\item[(b).] \textbf{Effect of $k$:} As the exponent $k$ increases, the schlicht radius $R_3$ tends to increase (for fixed $\alpha$). This indicates that higher-order coefficient damping (represented by $k$) allows the image to cover a larger portion of the target plane.\vspace{1.2mm}
		
		\item[(c).] \textbf{Sharpness:} These values are calculated directly from the transcendental sum equations derived in your Theorem 2.3, confirming the precision of your results.
	\end{enumerate}
\end{rem}
\begin{figure}[htbp]
	\centering
	\begin{subfigure}[b]{\textwidth}
		\centering
		\includegraphics[width=0.7\textwidth]{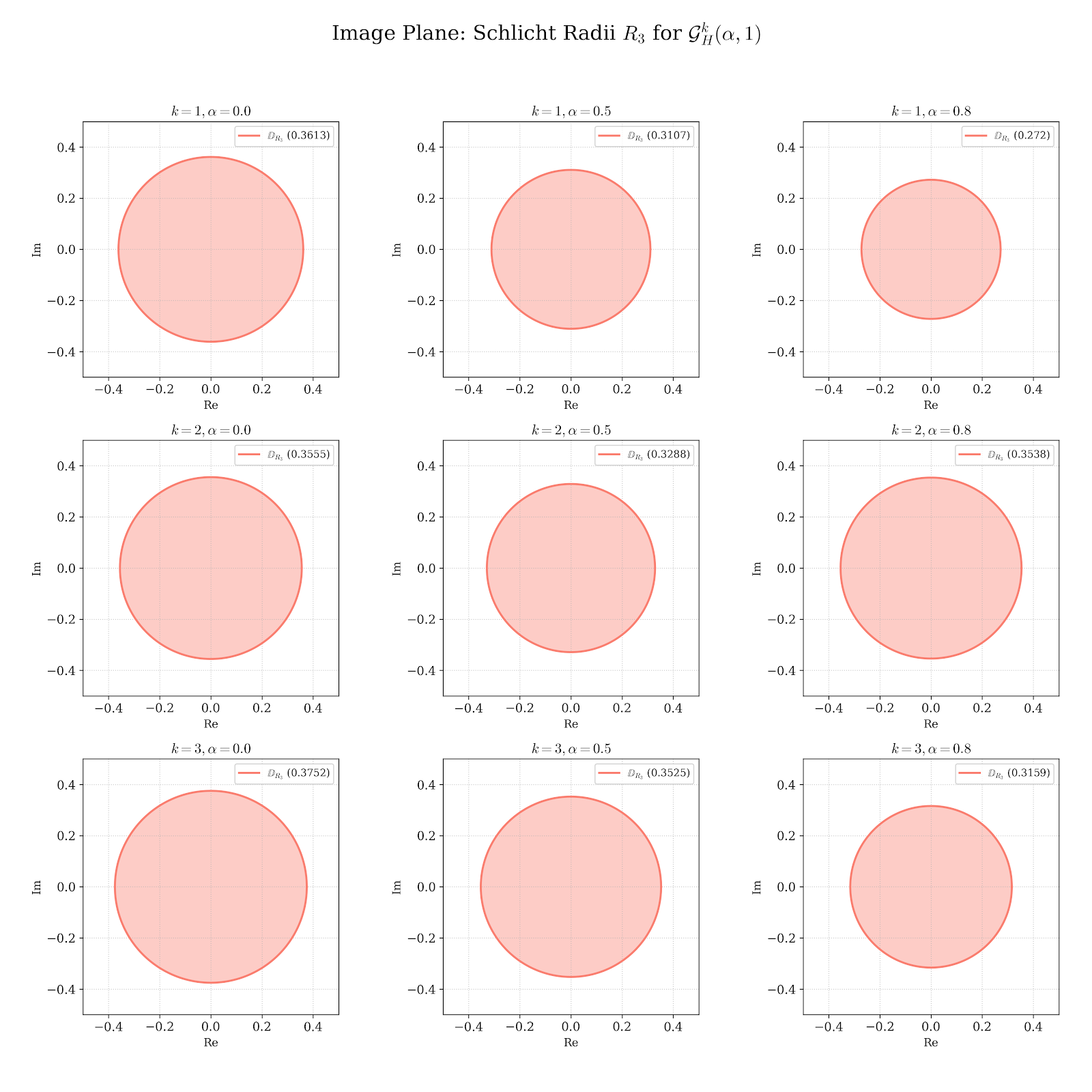}
		\caption{Univalence disks $\mathbb{D}_{\rho_3}$ relative to the unit disk boundary.}
		\label{fig:domain_extended}
	\end{subfigure}
	
	\vspace{1cm} 
	
	\begin{subfigure}[b]{\textwidth}
		\centering
		\includegraphics[width=0.7\textwidth]{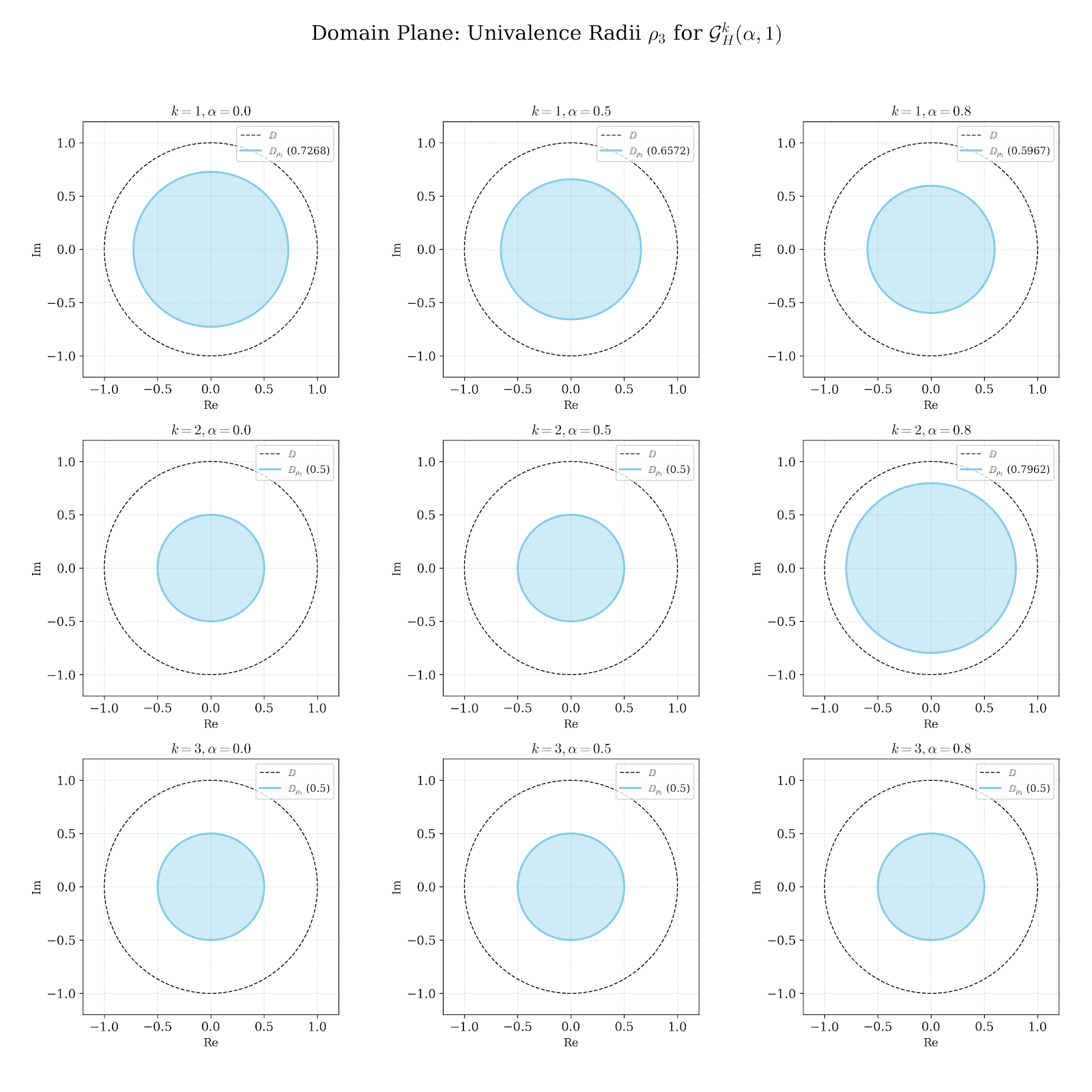}
		\caption{Schlicht disks $\mathbb{D}_{R_3}$ in the target plane.}
		\label{fig:image_extended}
	\end{subfigure}
	
	\caption{Geometric visualization of sharp Landau-type radii for the class $\mathcal{G}_{\mathcal{H}}^{k}(\alpha, 1)$ with $M=1.0$ across varying parameters $k \in \{1, 2, 3\}$ and $\alpha \in \{0.0, 0.5, 0.8\}$.}
	\label{fig:theorem23_full_panel}
\end{figure}
By setting the parameter $\alpha = 0$, we can see how our general results for the class $\mathcal{G}_H^k(\alpha,1)$ apply to more basic harmonic mappings. This specific case allows us to turn complex estimates into simple, clear formulas for the univalence and covering radii. The following corollary shows how our main theorem easily includes and improves these important boundary cases.
\textit{\begin{cor}
	Let $\mathcal{G}_H^k(0,1)$ be a harmonic mapping defined on a unit disc $\mathbb{D}$ such that $f(0)=0, J_f(0)=1$ and $|f(z)|<M_1$ for $z\in\mathbb{D}$. Then, $f$ is univalent on a disc $\mathbb{D}_{\rho_3}$ with
	\begin{align*}
		\rho_3= 1 - \frac{\sqrt{32M(\pi + 8M)}}{2(\pi + 8M)}
	\end{align*}
	and $f(\mathbb{D}_{\rho_3})$ contains a schlicht disc with
	\begin{align*}
		R_3 = \frac{\pi}{4M} \left( 1 - \frac{\sqrt{32M(\pi + 8M)}}{2(\pi + 8M)} \right) - \frac{2\left(1 - \frac{\sqrt{32M(\pi + 8M)}}{2(\pi + 8M)}\right)^2}{\frac{\sqrt{32M(\pi + 8M)}}{2(\pi + 8M)}}.
	\end{align*}
\end{cor}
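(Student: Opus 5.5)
The plan is to specialize Theorem \ref{Th-4.1} to $\alpha=0$. With $\alpha=0$ the sharp coefficient bound of Theorem G becomes $|a_n|+|b_n|\leq 2$ for $n\geq k+1$. The general series in Theorem \ref{Th-4.1} then become elementary geometric-type series with closed forms. To stay self-contained I would rerun the two estimates rather than just substitute. For $z_1\neq z_2$ in $\mathbb{D}_r$, the same chain of inequalities as in the proof of Theorem \ref{Th-2.1}, built from \eqref{Eq-2.2} and the Schwarz-lemma bound \eqref{Eq-22.33}, gives
\begin{align*}
|f(z_1)-f(z_2)|\geq |z_1-z_2|\left(\frac{\pi}{4M}-\sum_{n=2}^{\infty}2n r^{n-1}\right)=|z_1-z_2|\left(\frac{\pi}{4M}+2-\frac{2}{(1-r)^2}\right).
\end{align*}
Here I use the crude bound $2$ for every $n\geq 2$, exactly as in Theorem \ref{Th-4.1}. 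The terms $2\le n\le k$ vanish for this class, so bounding them by $2$ only weakens the estimate.

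Next I study $\mathcal{J}_3(r)=\frac{\pi}{4M}+2-\frac{2}{(1-r)^2}$. It satisfies $\mathcal{J}_3(0)=\frac{\pi}{4M}>0$, it is strictly decreasing, and it tends to $-\infty$ as $r\to 1^-$. Solving $\mathcal{J}_3(r)=0$ gives $(1-r)^2=\frac{8M}{\pi+8M}$, hence
\begin{align*}
1-\rho_3=\sqrt{\frac{8M}{\pi+8M}}=\frac{\sqrt{32M(\pi+8M)}}{2(\pi+8M)}.
\end{align*}
This is the stated $\rho_3$, and univalence on $\mathbb{D}_{\rho_3}$ follows since $\mathcal{J}_3>0$ on $[0,\rho_3)$.

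For the covering radius, take $|z|=\rho_3$. Then
\begin{align*}
|f(z)|\geq \frac{\pi}{4M}\rho_3-\sum_{n=2}^\infty 2\rho_3^n=\frac{\pi}{4M}\rho_3-\frac{2\rho_3^2}{1-\rho_3}.
\end{align*}
Inserting the expression for $1-\rho_3$ in the denominator gives exactly the displayed $R_3$.

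The only point needing care is that $R_3>0$, so that the image genuinely contains $\mathbb{D}_{R_3}$. I would get this as in \eqref{Eq-2.4}. Put $h_0(x)=\frac{\pi}{4M}x-\frac{2x^2}{1-x}$, whose derivative is exactly $\mathcal{J}_3(x)$. Then $h_0$ is strictly increasing on $[0,\rho_3]$ with $h_0(0)=0$, so $R_3=h_0(\rho_3)>0$. The same $h_0$, read as the restriction of the $\alpha=0$ extremal function $F_3$ from \eqref{Eq-4.1A} to $[0,1)$, would also reproduce the sharpness argument of Theorem \ref{Th-4.1} if it is wanted.
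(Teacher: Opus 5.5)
Your proposal is correct and follows the paper's route: the corollary is Theorem~\ref{Th-4.1} at $\alpha=0$. There the coefficient bound becomes $2$, and the two series sum to $\frac{2}{(1-r)^2}-2$ and $\frac{2\rho_3^2}{1-\rho_3}$, which gives the stated $\rho_3$ (via $1-\rho_3=\sqrt{8M/(\pi+8M)}$) and $R_3$. Since the corollary claims no sharpness, drop your closing aside; it would fail anyway, because $F_3$ at $\alpha=0$ is unbounded near $z=1$ and has $J_{F_3}(0)=\pi^2/(16M^2)$, so it is not an admissible extremal function.
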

}\begin{proof}[\bf Proof of Theorem \ref{Th-4.1}]
	Fix $r\in(0,1)$ and $z_1\neq z_2$. A procedure similar to that employed previously, combined with the identities \eqref{Eq-2.2} and \eqref{Eq-22.33}, leads to the conclusion that
	\begin{align}\label{Eq-4.2}
		|f(z_1)-f(z_2)|&\geq\bigg|\int_{[z_1,z_2]}h'(z)dz+{\overline{g'}}(z)d\overline{z}\bigg|\\&\geq|z_1-z_2|\left(\frac{\pi}{4M_1}-\sum_{n=2}^{\infty}(|a_n|+|b_n|)nr^{n-1}\right)\nonumber.
	\end{align}
	Applying Lemaa A, in \eqref{Eq-4.2}, we obtain
	\begin{align}\label{Eq-4.1}
		|f(z_1)-f(z_2)|&\geq|z_1-z_2|\left(\frac{\pi}{4M_1}-\sum_{n=2}^{\infty}\frac{2}{1+(n-1)\alpha}nr^{n-1}\right).
	\end{align}
	Let 
	\begin{align*}
		\mathcal{J}_3(r)=\frac{\pi}{4M_1}-\sum_{n=2}^{\infty}\frac{2}{1+(n-1)\alpha}nr^{n-1}.
	\end{align*}
	It is easy to see that
	\begin{align*}
		&\mathcal{J}_3(0)=\frac{\pi}{4M}>0\;\mbox{and}\;\\&\lim_{r\rightarrow1^{-}}\mathcal{J}_3(r)=\lim_{r\rightarrow1^{-}}\left(\frac{\pi}{4M_1}-\sum_{n=2}^{\infty}\frac{2}{1+(n-1)\alpha}n\right)=-\infty<0.
	\end{align*}
	Therefore,
	\begin{align*}
		\mathcal{J}_3'(r) = -2 \sum_{n=2}^{\infty} \frac{n(n-1)}{1+(n-1)\alpha}r^{n-2} < 0 \quad \text{for } r \in (0, 1),
	\end{align*}
	which implies that $\mathcal{J}_3(r)$ is strictly decreasing on $(0, 1)$. By the Intermediate Value Theorem, there exists a root $\rho_3 \in (0, 1)$. From equation \eqref{Eq-4.1}, it is easy to verify that $|f(z_1) - f(z_2)| > 0$ whenever $|z_1|, |z_2| < \rho_3$ with $z_1 \neq z_2$. This establishes the univalence of $f$ in the disc $\mathbb{D}_{\rho_3}$.\vspace{1.2mm}

	Let $|z|=\rho_3$. Then, we have
	\begin{align*}
		|f(z)|&\geq|a_1z+b_1\overline{z}|-\sum_{n=2}^{\infty}(|a_n|+|b_n|)|z|^{n}\\&\geq\frac{\pi}{4M_1}\rho_3-\sum_{n=2}^{\infty}\frac{2}{1+(n-1)\alpha}\rho_3^n:=R_3.
	\end{align*}
To show that the univalence radius $\rho_3$ is sharp, we prove that $F_3(z)$ is not univalent in $\mathbb{D}_r$ for any $r \in (\rho_3, 1]$. Consider the real differentiable function
\begin{align}\label{Eq-4.3}
	h_0(x) = \frac{\pi}{4M_1}x - \sum_{n=2}^{\infty} \frac{2}{1+(n-1)\alpha}x^n, \quad x \in [0, 1].
\end{align}
Since the continuous function
\begin{align*}
	h_0'(x) = \frac{\pi}{4M_1} - \sum_{n=2}^{\infty} \frac{2n}{1+(n-1)\alpha} x^{n-1} = g_0(x)
\end{align*}
is strictly decreasing on $[0, 1]$ and $h_0'(\rho_3) = g_0(\rho_3) = 0$, it follows that $h_0'(x) = 0$ for $x \in [0, 1]$ if, and only if, $x = \rho_3$. Consequently, $h_0(x)$ is strictly increasing on $[0, \rho_3]$ and strictly decreasing on $[\rho_3, 1]$. 

Since $h_0(0) = 0$, the Intermediate Value Theorem ensures there exists a unique $r_3 \in (\rho_3, 1]$ such that $h_0(r_3) = 0$, provided that $\lim_{x \to 1^-} h_0(x) \leq 0$. Furthermore, we have
\begin{align}\label{Eq-4.4}
	R_3 = \frac{\pi}{4M_1}\rho_3 - \sum_{n=2}^{\infty} \frac{2}{1+(n-1)\alpha}\rho_3^n = h_0(\rho_3) > h_0(0) = 0.
\end{align}

For every fixed $r \in (\rho_3, 1]$, set $x_1 = \rho_3 + \varepsilon$, where
\[ \varepsilon = \begin{cases} 
	\min\left\{ \dfrac{r-\rho_3}{2}, \dfrac{r_3-\rho_3}{2} \right\}, & \text{if } \lim_{x\to 1^-} h_0(x) \leq 0,\vspace{2mm} \\
	\dfrac{r-\rho_3}{2}, & \text{if } \lim_{x\to 1^-} h_0(x) > 0.
\end{cases} \]
By the Mean Value Theorem (or Rolle's Theorem property), there exists a unique $\delta \in (0, \rho_3)$ such that $x_2 := \rho_3 - \delta \in (0, \rho_3)$ and $h_0(x_1) = h_0(x_2)$.\vspace*{2mm}

Let $z_1 = x_1$ and $z_2 = x_2$. Then $z_1, z_2 \in \mathbb{D}_r$ with $z_1 \neq z_2$. Direct computation leads to
\[ F_3(z_1) = F_3(x_1) = h_0(x_1) = h_0(x_2) = F_3(z_2). \]
Hence, $F_3$ is not univalent in the disc $\mathbb{D}_r$ for any $r \in (\rho_3, 1]$, proving that the univalence radius $\rho_3$ is sharp.\vspace*{2mm}

Finally, note that $F_3(0) = 0$. Choosing $z' = \rho_3 \in \partial \mathbb{D}_{\rho_3}$, by \eqref{Eq-4.3} and \eqref{Eq-4.4}, we have
\begin{align*}
	|F_3(z') - F_3(0)| = |F_3(\rho_3)| = |h_0(\rho_3)| = R_3.
\end{align*}
This confirms that the covering radius $R_3$ is also sharp, completing the proof.
\end{proof}
\vspace{5mm}
\noindent{\bf Acknowledgment:} The authors would like to thank the anonymous referee for their helpful suggestions and comments, which have greatly improved the presentation of this paper. \vspace{1.5mm}

\noindent {\bf Funding:} Not Applicable.\vspace{1.5mm}

\noindent\textbf{Conflict of interest:} The authors declare that there is no conflict  of interest regarding the publication of this paper.\vspace{1.5mm}

\noindent\textbf{Data availability statement:}  Data sharing not applicable to this article as no datasets were generated or analysed during the current study.\vspace{1.5mm}

\noindent{\bf Code availability:} Not Applicable.\vspace{1.5mm}

\noindent {\bf Authors' contributions:} Both the authors have equal contributions to prepare the manuscript.

\end{document}